\documentclass[11pt,a4paper]{amsart}

\usepackage{graphicx,url,etoolbox}

\makeatletter
\patchcmd{\@settitle}{center}{flushleft}{}{}
\patchcmd{\@settitle}{center}{flushleft}{}{}
\patchcmd{\@setauthors}{\centering}{\raggedright}{}{}
\patchcmd{\abstract}{3pc}{0pt}{}{} 
\makeatother
\usepackage{mathtools}
\usepackage{amsmath,amssymb,amsfonts,color}
\usepackage{algorithm}
\usepackage{algorithmic}
\usepackage{color}
\usepackage{booktabs}
\usepackage[mathscr]{eucal}
\usepackage{url}
\usepackage{dsfont}
\usepackage{subfloat}
\usepackage{subcaption}
\usepackage{tabularx}
\usepackage{hyperref}
\usepackage{caption}
\usepackage{subcaption,graphicx}
\usepackage{bm}
\usepackage{epstopdf}
\usepackage{xargs}
\usepackage[dvipsnames]{xcolor}
\calclayout
\numberwithin{equation}{section}

\newtheorem{theorem}{Theorem}[section]
\newtheorem{lemma}{Lemma}

\newtheorem{remark}{Remark}
\newtheorem{definition}{Definition}
\newtheorem{assumption}{Assumption}

\makeatletter
\newcommand\makebig[2]{%
	\@xp\newcommand\@xp*\csname#1\endcsname{\bBigg@{#2}}%
	\@xp\newcommand\@xp*\csname#1l\endcsname{\@xp\mathopen\csname#1\endcsname}%
	\@xp\newcommand\@xp*\csname#1r\endcsname{\@xp\mathclose\csname#1\endcsname}%
}
\makeatother
\makebig{biggg} {3.0}
\makebig{Biggg} {4}
\makebig{bigggg}{5}
\makebig{Bigggg}{6}

\newcommand{\cF}{\mathcal{F}}
\newcommand{\R}{\mathbb{R}}

\newcommand{\cP}{\mathcal{P}}

\newcommand{\cU}{\mathcal{U}}

\newcommand{\cJ}{\mathcal{J}}
\newcommand{\cH}{\mathcal{H}}
\newcommand{\cL}{\mathcal{L}}
\newcommand{\cW}{\mathcal{W}}
\newcommand{\E}{\mathbb{E}}
\newcommand{\px}[1]{\partial_{{x} _{#1}}}

\newcommand{\norm}[1]{\left\lVert #1\right\rVert}

\def\one{\mbox{1\hspace{-4.25pt}\fontsize{12}{14.4}\selectfont\textrm{1}}}

\definecolor{philipp}{RGB}{205, 102, 000}

\usepackage[colorinlistoftodos,prependcaption,textsize=tiny]{todonotes}

\begin{document}
	\title[Policy iteration for HJI equations with control constraints]{Policy iteration for Hamilton-Jacobi-Isaacs equations with control constraints and comparison with Hamilton-Jacobi-Bellman equations}

	\author{Sudeep Kundu\textsuperscript{$\dagger$}}
	\thanks{\textsuperscript{$\dagger$}Department of Mathematical Sciences, Rajiv Gandhi Institute of Petroleum Technology, Jais-229304, Uttar Pradesh, India, ({\tt
			sudeep.kundu@rgipt.ac.in}).}
	\author {Karl Kunisch\textsuperscript{$*$}}
	\thanks{\textsuperscript{$*$}Institute for Mathematics and Scientific Computing, University of Graz, Heinrichstrasse 36, A-8010 Graz, Austria and
		Radon Institute for Computational and Applied Mathematics (RICAM), Altenbergerstra{\ss}e 69, A-4040 Linz, Austria, ({\tt
			karl.kunisch@uni-graz.at}).}
	\maketitle
	\begin{abstract}
		Convergence of the bilevel policy iteration algorithm for the value function, satisfying the Hamilton-Jacobi-Isaacs (HJI) equation, is analyzed in the presence of control constraints. Both first and second-order HJI equations corresponding to the deterministic and stochastic systems  are considered. For numerical tests,  a semi-implicit upwind scheme for backward HJI PDEs is applied and a thorough comparison between the solutions to first and second-order Hamilton-Jacobi-Bellman (HJB) equations is presented for both the unconstrained and the constrained control cases. 
		
	\end{abstract}
	
	{\em{ Keywords:}}
	Optimal feedback control, $\cH_2/\cH_{\infty}$ control synthesis, Hamilton-Jacobi-Bellman equation, Isaacs' equation, policy iteration, upwind scheme.
	
	{\em{AMS classification:}}
	49J20, 49L20, 49N35, 93B52, 93B36.
	%
	
	

	\section{Introduction}\label{intro}

	The Dynamic Programming Principle (DPP) is an important tool in optimal control theory
	since it opens the door for feedback or closed loop control.
	It evolves around the value function, which is the  viscosity solution of a Hamilton-Jacobi (HJ) type partial differential equation (PDE) \cite{BCD97}. In  open-loop control, the control depends only on time and the initial state. It is not given as a function of the state at time $t$. This has several disadvantages. If, for example, the initial state changes, the optimal control has to be recomputed  from scratch. Also, open loop control is not designed to  handle modeling errors or exogenous disturbances, or other variations of system parameters. In this scenario, feedback or closed-loop control has several advantages, since it explicitly depends on the state of the system. Moreover, robust optimal feedback control is essential in many scenarios of practical interest, including, for instance, robotics and flight control, due to its enhanced disturbance rejection properties. $\cH_\infty$ robust control \cite{DG89} - the name originated from the Hardy space of analytic functions bounded in the right half-plane -  is a min-max optimization technique, also known as convex-concave games, which is the focus in this paper.

	In the  case of linear systems with quadratic cost,  $\cH_2$ and $\cH_\infty$ control problems can be solved on the basis of the regulator or the game-theoretic  Riccati equations.
	For nonlinear systems or  non-quadratic cost, such complete solution strategies are not available. Linear-quadratic approximations may prove effective in a local analysis for certain cases, but that may step short in others since critical information of the nonlinearity gets ignored.  In this paper, we are interested in directly controlling  the nonlinear system. When there is only one player (control or input) the optimal   value function satisfies the Hamilton-Jacobi-Bellman (HJB) equation related to the $\cH_2$ control problem.  Two-person zero-sum noncooperative differential games, where one player or agent (control) wants to minimize the cost or payoff function and another player (disturbance) wants to maximize the cost, are closely related to the $\cH_\infty$ control which in turn satisfies the Hamilton Jacobi Isaacs (HJI) equation \cite{V92}. Regarding details on  $\cH_\infty$ control,  its relation to the Isaacs equation, and to two-player zero-sum differential game theory, we refer to \cite{S96, S99, BO98}.
	Solving HJ equations is frequently addressed by successive approximation or policy iteration, which are essentially equivalent to applying a Newton iteration. In the case of infinite horizon or stabilization problems, a  proper initialization in the form of a stabilizing control is required. Whenever an initial stabilizing control is not available, it is common to use a discount factor in the cost. Once it is solved, we obtain the control in the HJB case, and the  control as well as the  disturbance in the HJI case. For differentiability of the value function, some notable results are given in  \cite{G05} for linear systems and in \cite{CF13} for nonlinear systems.

	A variety of  numerical techniques are available in the literature to solve for HJB or HJI equations, including \cite{CL84}. Policy iteration, mentioned already above,   for the HJB equation was developed  by several authors \cite{AFK15, AFS18, BM98, BST98} for unconstrained control and in \cite{AL05, kk20} for the case with control constraints. While in  \cite{kk20} the constraints are realized as hard constraints, they  are approximated via non-quadratic penalty functionals  in \cite{AL05} and earlier in \cite{L98}. In \cite{Aal17} numerical techniques related to finite difference schemes are discussed. Other techniques include semi-Lagrangian schemes \cite{FF14}, domain decomposition methods \cite{CCFP12}, penalty approach with finite difference methods \cite{WR11}, the finite element methods \cite{GR85}, and level set methods \cite{OF03}. Policy iteration for the HJI equation has likely  first been  addressed  in \cite{BM98}. More recent results include   polynomial approximation methods \cite{kk2018, kkk20}, tensor calculus techniques \cite{BSU16, DKK19} and deep neural network techniques \cite{KW20, DLM20}, reinforcement learning techniques \cite{B19}, graph tree structures  \cite{AFS18}, sparse grids  \cite{GK16} and splitting methods \cite{LCO18}.
	
		%
		%
		%
		%
	%
	HJI equations arise in stochastic differential games, providing the value function for zero-sum settings. In systems like the stochastic Lorenz equation and  Van der Pol oscillator, they capture the minimax interaction between stabilizing control and destabilizing disturbances. This framework explains uncertainty in chaotic, noisy, and nonlinear dynamics, with applications ranging from engineering structures and secure communications to biological networks, finance, and climate systems. Another typical setting models a central agent (leader) influencing many followers, each striving to remain close while subject to random disturbances \cite{YGK25}.
	For stochastic systems, $\cH_\infty$ control was discussed in \cite{ZC06} both for the finite and infinite horizon problems. Policy iteration, also known as Howard's algorithm, in the stochastic context,   was considered in \cite{WS02} for the  unconstrained control  case  and in \cite{kk20} for the case with constraints. Neural network based policy iteration for the second-order HJI equations was presented recently in \cite{IRZ20}. A deep neural network technique to solve zero-sum stochastic differential games was applied in \cite{RZ20}. For second-order linear HJB, tensor calculus techniques were considered in \cite{YPL}. The Markov chain approximation method was used in \cite{BZ03} to solve the stochastic HJB equation  in conjunction with policy iteration and the finite difference method. Finally, we refer to the monograph \cite{KD01} on numerical methods for stochastic control problems.
	
	In this article, we address selected aspects concerning the  convergence of policy iteration related to two-player zero-sum differential games, in the presence of  control. In the unconstrained control case, this was addressed in \cite{BM98} for deterministic systems. In \cite{ALH06} control constraints are realized by a penalty technique.  Here, we employ a projection operator to  strictly enforce convex control constraints.  We show convergence of the value function for both first and second-order  HJI equations. With a semi-implicit upwind scheme, we solve the HJI equation; and with an application of the bisection algorithm, we obtain the smallest value of the disturbance attenuation level, to be  defined below. We test our results for the controlled  Van der Pol and Lorenz systems and solve them for both the $\cH_2$ and $\cH_\infty$ formulations. A  detailed comparison between the HJB and HJI synthesis in terms of the performance index, the evolution of the state, and the control trajectory is provided. To the best of our knowledge, such an analysis is not available in the literature for  HJI equations with  hard constraints.

	The rest of the paper is divided as follows. Section $2$ is dedicated to the convergence of the policy iteration algorithm for the first order HJI equation under input constraints. Section $3$ presents the corresponding results for  the stochastic case. Section 4 contains the numerical results, and Section $5$ provides some concluding remarks.
	
	\section{Nonlinear $\cH_\infty$ control for deterministic systems}
	We consider the following infinite horizon optimal robust control problem:
	\begin{align}\label{eq1.1}
		\underset{u(\cdot)\in \cU}{\min}\;\underset{w(\cdot)\in \cW}{\max}\;&\cJ(u,w;x):=\int\limits_0^\infty \Big(\ell(y(t))+\|u(t)\|_R^2-\gamma^2\|w(t)\|_P^2\Big)\, dt,
	\end{align}
	subject to the  deterministic system
	\begin{equation}\label{eq1.2}
		\dot y(t)= f(y(t))+g(y(t))u(t)+h(y(t))w(t)\,,\quad y(0)=x,
	\end{equation}
	where $y(t)=(y_1(t),\ldots,y_d(t))^t\in \R^d$ is the state vector, $w(\cdot)\in\cW$ is the disturbance signal with $\cW=\cL^2(\R_+;\R^q)$,
	$u(\cdot)\in\cU$ is the control input with $\cU=\cL^2(\R_+;U)$, where $U\subset\R^m$ is defined later.
	Further $\ell(y)>0$, $y\neq 0$ is the state running cost, with $\ell(0)=0$,  $\norm{w}^2_{P}=w^tPw$, with $P\in\R^{q\times q},\,P>0$  a positive definite matrix. The disturbance attenuation level $\gamma>0$ is known as the measure of robustness, the control penalization term $\|u\|_R^2=u^tRu$ with $R\in\R^{m\times m},\,R>0$ is a positive definite matrix. We assume that the system dynamics $f(y):\R^d\rightarrow \R^d$ with $f(0)=0$, $h(y):\R^d\rightarrow \R^{d\times q}$, and $g(y):\R^d\rightarrow \R^{d\times m}$ are uniformly Lipschitz continuous on $\R^d$, and that  $g$  and  $h$ are globally bounded.
	Since $f(0)=0$ the state $y=0$ is an equilibrium of the above system for $u=w=0$. Throughout, the initial states will be chosen from a fixed  domain $\Omega$ containing the origin in its interior.

	We assume that the control lies in a closed convex set $U$ containing the origin in $\R^m$.
	As a particular example, $U$ can be chosen as
	\begin{align}\label{eqc}
		U=\{u=(u_1,\ldots,u_m)\hspace{0.1cm} | \hspace{0.1cm}\alpha_i\leq u_i\leq \beta_i, \quad i=1,\ldots, m\},
	\end{align}
	where $\alpha=(\alpha_1,\ldots,\alpha_m)\in \R^m$, $\beta=(\beta_1,\ldots,\beta_m)\in\R^m$ and $\alpha_i\leq0,$ $\beta_i\geq 0$.
	
	In the time domain, the goal of $\cH_{\infty}$ control consists in defining a feedback control law $u(t)=u(y(t))$ such that the system \eqref{eq1.2} is closed loop asymptotically stable and has $\cL_{2}$ gain less than or equal to $\gamma$, i.e.
	\begin{equation}\label{eq1.3}
		\int\limits_{0}^{T}\Big(\ell(y(t))+\|u(t)\|^2_R\Big)\, dt\leq\gamma^2\int\limits_{0}^{T}\norm{w(t)}^2_{P}\,dt,
	\end{equation}
	for all $w\in \cL^2(0,T;\R^q)$, and all $T\geq 0$, where $y$ the solution of \eqref{eq1.2} with $x=0$.
	The $\cH_{\infty}$ norm $\gamma^*$ is defined as  the smallest value of $\gamma$  such that for $\gamma\geq \gamma^*$, \eqref{eq1.3} holds under system \eqref{eq1.2}.
	The optimal value function associated to \eqref{eq1.1},  also known as available storage function, is defined by
	\[V_\gamma(x)=\underset{u(\cdot)\in \cU}{\min}\;\underset{w(\cdot)\in \cW}{\max}\;\cJ(u,w;x).\]
	Throughout we assume that:
	\begin{assumption}\label{ass1}
		The radially unbounded value function satisfies  $V_\gamma\in C^1(\R^d)$. Further, for the policy iteration Algorithm \ref{alg:sg1}, we assume that $V^{(i,j)}_{\gamma}(x)\in C^1(\R^d)$ for each $i,\hspace{0.1cm} j\geq 0$.
	\end{assumption}
	\begin{remark}\label{r1}
		{\em The regularity assumption for $V_\gamma$ , $V^{(i,j)}_{\gamma}$ is common in the literature for related analysis. Similar assumptions were made  earlier in \cite{SL79, V92, BM98, AL05, ALH06, GK16}. The investigation of the regularity of the value function is certainly an important research topic in its own right, see also \cite{BKP19} and \cite{kp24}. In passing we note that the upper and lower solutions to the HJI equation \eqref{hji3} are known to be Lipschitz continuous and hence by Rademacher's theorem they are differentiable almost everywhere \cite[Corollary 3.8]{cp10}.}
	\end{remark}
	Due to dynamic programming principle, under the Assumption \ref{ass1},  the optimal value function,
	$V_\gamma(x)$ satisfies the HJI equation \cite[Chapter 8]{BO98},
	\begin{equation}\label{hji}
		\underset{u\in U}{min}\,\,\underset{w\in W}{max}\{ \nabla V_\gamma(x)^t(f(x)+g(x) u+h(x)w)+ \ell(x)+\|u\|_R^2-\gamma^2\|w\|_P^2\}=0\,,
	\end{equation}
	where $\nabla V_\gamma(x)=(\px{1}V_\gamma,\ldots,\px{d}V_\gamma)^t$.
	Note that since the cost \eqref{eq1.1} and the system \eqref{eq1.2} are separable with respect to control and disturbance strategies, the so called Isaacs' condition holds \cite[Chapter VIII]{BCD97} which guarantees the existence of the value function satisfying \eqref{hji}.
	
	For $\gamma\geq \gamma^*$, \eqref{hji} has a positive definite solution \cite[Appendix]{BCD97}, \cite{S99} (here positive definite means $V_\gamma(x)>0$ for $x\neq 0$ and $V_\gamma(0)=0$).
	If the initial state  is not an equilibrium solution, then instead of \eqref{eq1.3} we get
	\begin{equation}\label{eq1.4}
		\int\limits_{0}^{T}\Big(\ell(y(t))+\|u(t)\|^2_R\Big)\, dt\leq\gamma^2\int\limits_{0}^{T}\norm{w(t)}^2_{P}\;dt + V_\gamma(x),
	\end{equation}
	see e.g. \cite[Appendix]{BCD97}, \cite{V92}.

	Under Assumption \ref{ass1}, according to \eqref{hji}, the optimal control u is given  in feedback strategic form by
	$$u^*_\gamma(x)=\cP_{U}\Big(-\frac{1}{2}R^{-1}g(x)^t\nabla V_\gamma(x)\Big),$$ where $\cP_{U}$ is the orthogonal projection in the $R$-weighted inner product on $\R^m$ onto  $U$, and $V_{\gamma}$ is the corresponding HJI value function. For the particular case \eqref{eqc}, the projection $\cP_{U}$ is given by $u^*_\gamma(x)=\min\Big\{\beta,\max\{\alpha,-\frac{1}{2}R^{-1}g(x)^t\nabla V_{\gamma}(x)\}\Big\}$, where the min - and max - operations are considered coordinate wise. An  optimal disturbance $w_\gamma\in \R^q$ is given in feedback strategic form by $w^*_\gamma=\frac{1}{2\gamma^2}P^{-1}h^t\nabla V_\gamma(x)$.
	
	When there is no confusion we denote $(u_\gamma,w_\gamma) = (u^*_\gamma,w^*_\gamma)$. Moreover, in many places, we suppress the dependence of $f\;,g$ and $h$ on the state variable.
	
	Using the values of $u_\gamma$ and $w_\gamma$, the HJI equation \eqref{hji} becomes
	\begin{align}\label{hji2}
		\nabla V_\gamma(x)^t\Big(f(x)&+g(x)\cP_{U}\Big(-\frac{1}{2}R^{-1}g(x)^t\nabla V_{\gamma}(x)\Big)\Big)+ \frac{1}{4\gamma^2} \nabla V_\gamma(x)^th(x)P^{-1}h(x)^t\nabla V_\gamma(x)\notag\\
		&+\ell(x)+\norm{\cP_{U}\Big(-\frac{1}{2}R^{-1}g(x)^t\nabla V_{\gamma}(x)\Big)}^2_{R}=0 \,.
	\end{align}
	For an unconstrained control $u_\gamma\in \R^m$, the corresponding HJI equation reduces to
	\begin{align}\label{hji3}
		\nabla V_\gamma(x)^tf(x)+ \frac14 \nabla V_\gamma(x)^t\left(\frac{1}{\gamma^2}h(x)P^{-1}h(x)^t-g(x)R^{-1}g(x)^t\right)\nabla V_\gamma(x)+\ell(x)=0 \,.
	\end{align}
	Sufficient conditions for the local existence of a solution $V_\gamma\geq 0$ of \eqref{hji3} are given in \cite{V91}.
	When $\gamma\to\infty$ in the disturbance law, the HJI equations \eqref{hji2} and \eqref{hji3} result in the constrained and unconstrained HJB equations related to $\cH_2$ control. Related details are given in \cite{kk20}. The case with constraints is treated in the Appendix of \cite{BCD97}, for example. In \cite[Appendix B, Theorem 4.5] {BCD97} regularity conditions on the problem data are provided under which the existence of (viscosity) solutions to \eqref{hji2} implies the solvability of \eqref{eq1.1} and vice-versa.  Here (strict)solvability refers to the property that the unperturbed system \eqref{eq1.2} is (asymptotically) stable and the $L^2$-gain property \eqref{eq1.4} holds \cite[Appendix B.1, Theorem 4.5]{BCD97}.


	{Throughout the paper, it is assumed that $\bar \Omega$ is contained in the region of stability  related to \eqref{eq1.2} with $w=0$,  i.e. the region of initial conditions  for which trajectories are asymptotically stabilizable to the origin. In certain cases, additional condition on initial data in $\Omega$ will be specified.}

	Solving \eqref{hji3} is a challenging task, and policy iteration is one possible approach. Alternative methods include polynomial approximations to the HJI equation \cite{kkk20} where the case without constraints has been discussed.
	The policy iteration or successive approximation procedure  for solving the  HJI equation \eqref{hji2} is presented in Algorithm \ref{alg:sg1} for a fixed value $\gamma$. Finding the smallest value of $\gamma$ i.e. $\gamma^*$ is an independent task which is typically achieved by a bisection algorithm in numerical practice, see e.g. \cite{BBK89}.
	By applying the bilevel policy iteration Algorithm \ref{alg:sg1}, which can also be considered as a Newton iteration,  equation \eqref{hji2} is reduced to solving  a sequence of linear HJI equations \eqref{eq:aux1}, referred to  as Generalized HJI (GHJI) equations.
	
	\begin{algorithm}[ht!]
\begin{algorithmic}
	\STATE{{\bf Input}: Let $u_{\gamma}^{(0)}(x)$ be an asymptotically stabilizing control law  for  the dynamics \eqref{eq1.2} with $w_\gamma=0$ and $\gamma\geq\gamma^*$.}
	\STATE{\bf  For $i=0$ to $\infty$}
	\STATE{\bf \qquad Set $w^{(i,0)}_{\gamma}(x)\equiv 0$,}
	\STATE{\bf \qquad For $j=0$ to $\infty$}
	\STATE{\bf 	\hspace{1cm} Solve for $V^{(i,j)}_{\gamma}(x)\in C^1(\R^d)$ :
		\begin{equation}\label{eq:aux1}
			\Bigg\{	\begin{array}{r@{}l}
				\nabla V^{(i,j)}_{\gamma}(x)^t&\big(f(x)+g(x)u^{(i)}_{\gamma}+h(x)w^{(i,j)}_\gamma\big)+ \ell(x)+\|u^{(i)}_{\gamma}\|_R^2-\gamma^2\|w^{(i,j)}_{\gamma}\|^2_P=0\,,\\
				V^{(i,j)}_{\gamma}(0)&=0.
			\end{array}
	\end{equation}}
	\vspace{-3mm}		
	\STATE{\bf \hspace{1cm} Update the disturbance:
		\begin{equation}\label{eq:aux2}
			w^{(i,j+1)}_{\gamma}(x)=\frac{1}{2\gamma^2}P^{-1}h(x)^t\nabla V^{(i,j)}_{\gamma}(x)\,,
		\end{equation}
		\qquad	j=j+1	}
	
	\bf {\qquad End j loop
		
		\STATE{Update the control:
			\vspace{-3mm}
			\begin{equation}\label{eq:aux3}
				u^{(i+1)}_{\gamma}(x)=\cP_{U}\Big(-\frac{1}{2}R^{-1}g(x)^t\nabla V_\gamma^{(i,\infty)}(x)\,\Big).
			\end{equation}
			i=i+1	\;}
		
		End i loop}
	\caption{Continuous Policy Iteration for HJI equations}\label{alg:sg1}
\end{algorithmic}
\end{algorithm}

\begin{remark}\label{rm1}
{\em In Algorithm \ref{alg:sg1}  we tacitly assumed that the  inner loop converges. Sufficient conditions for this to occur will be given in Theorem \ref{hlm2.1}  and  Remark \ref{rm4} below.}
\end{remark}
\begin{definition}[Asymptotic Stability in the sense of Lyapunov]
The equilibrium solution $y=0$ of \eqref{eq1.2} is locally asymptotically stable if:
\begin{enumerate}
	\item It is stable in the sense of Lyapunov i.e., for every $\epsilon > 0$, there exists a $\delta(\epsilon) > 0$ such that, if $\|x\| < \delta$, then $\|y(t)\| < \epsilon$ for all $t > 0$.
	\item There exists a $\delta'  > 0$ such that, if $\|x\| < \delta'$, then $y(t) \to 0$ as $t \to \infty$.
\end{enumerate}
The equilibrium state $y=0$ of \eqref{eq1.2} is globally asymptotically stable if it is asymptotically stable for any initial condition, meaning that the condition for asymptotic stability holds for any $\delta' > 0$.
\end{definition}
\begin{remark}\label{rm2}
{\em In Algorithm \ref{alg:sg1}  it is further  assumed that  $\gamma\geq\gamma^*$.  There are no effective analytical  methods to characterize the value $\gamma^*$, \cite[page 509]{BCD97}. In numerical practice, $\gamma$ is chosen sufficiently large at first and then reduced by a bisection algorithm, as in \cite{BBK89}.}
\end{remark}

Next we establish a monotonicity result for the inner loop.  For this purpose we define for $x\in\Omega$ and given a stabilizing control $u_\gamma^{(i)}$
\begin{equation}\label{eqkk1}
\left\{
\begin{array}{ll}
	V^{(i)}_\gamma(x):=\max_{w(\cdot)\in \cW} \cJ(u_\gamma^{(i)},w;x)\\[1.7ex]
	\dot y(t)= f(y(t))+g(y(t))u^{(i)}_{\gamma}(y(t))+h(y(t))w(t)\,,\quad y(0)=x.
\end{array}
\right.
\end{equation}

\begin{theorem}\label{hlm2.1}
If the system
\begin{equation}\label{eq1}
	\dot y(t)= f(y(t))+g(y(t))u^{(i)}_{\gamma}(t)+h(y(t))w^{(i,j)}_{\gamma}(t)\,,\quad y(0)=x,
\end{equation}
is asymptotically stable in the sense of Lyapunov on $\Omega$ for all pairs $(i,j)$, where $i\geq 0$ is fixed, and all   $j\geq 0$, and $V^{(i,j)}_\gamma\in C^1(\R^d)$,
then we obtain
$0\le V_\gamma^{(i,j)}(x)\leq V_\gamma^{(i,j+1)}(x) \leq V^{(i)}_\gamma(x),$ $\forall x\in\Omega$.
Moreover $V_\gamma^{(i,j)}(x)>0$ for $x\neq 0$.
\end{theorem}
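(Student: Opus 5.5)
Fix $i$ and abbreviate $u=u^{(i)}_\gamma$, $w_j=w^{(i,j)}_\gamma$, $V_j=V^{(i,j)}_\gamma$. The proof has three steps: represent each $V_j$ as a cost along trajectories, compare consecutive iterates by completing the square in $w$, and then bound every $V_j$ by $V^{(i)}_\gamma$.

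\emph{Step 1: integral representation.} Let $y$ solve \eqref{eq1} with the feedback pair $(u,w_j)$ and $y(0)=x\in\Omega$. Since $V_j\in C^1$, \eqref{eq:aux1} gives
\[
\frac{d}{dt}V_j(y(t)) = -\bigl(\ell(y)+\|u\|_R^2-\gamma^2\|w_j\|_P^2\bigr).
\]
Integrating over $[0,T]$ and letting $T\to\infty$, asymptotic stability gives $y(T)\to 0$. Together with $V_j(0)=0$ and continuity of $V_j$, this yields $V_j(x)=\cJ(u,w_j;x)$. Because $w_j$ is one admissible disturbance, we get $V_j(x)\le \max_{w}\cJ(u,w;x)=V^{(i)}_\gamma(x)$.

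\emph{Step 2: monotonicity.} Evaluate $V_j$ along the trajectory $y$ driven by $(u,w_{j+1})$, which converges to $0$ by hypothesis. Then
\[
\frac{d}{dt}V_j(y)=\nabla V_j^t(f+gu+hw_{j+1}) = -\ell-\|u\|_R^2+\gamma^2\|w_j\|_P^2+\nabla V_j^t h(w_{j+1}-w_j),
\]
where \eqref{eq:aux1} was used. By \eqref{eq:aux2}, $h^t\nabla V_j=2\gamma^2Pw_{j+1}$, so the last term equals $2\gamma^2 w_{j+1}^tP(w_{j+1}-w_j)$. Completing the square,
\[
\gamma^2\|w_j\|_P^2+2\gamma^2 w_{j+1}^tP(w_{j+1}-w_j)=\gamma^2\|w_{j+1}\|_P^2+\gamma^2\|w_{j+1}-w_j\|_P^2 .
\]
Integrating from $0$ to $\infty$ along this trajectory and comparing with the representation of $V_{j+1}$ from Step 1 (along the same trajectory) gives
\[
V_{j+1}(x)-V_j(x)=\gamma^2\int_0^\infty\|w_{j+1}-w_j\|_P^2\,dt\ge 0 .
\]

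\emph{Step 3: positivity.} For $j=0$ we have $w_0=0$, so $V_0(x)=\int_0^\infty(\ell(y)+\|u\|_R^2)\,dt\ge 0$. The integrand is strictly positive near $t=0$ when $x\neq0$, because $\ell(y)>0$ for $y\ne0$. Monotonicity then gives $V_j\ge V_0>0$ for $x\ne 0$ and every $j$.

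\emph{Main obstacle.} The delicate point is justifying the passage $T\to\infty$ so that the improper integrals converge. The boundary term $V_j(y(T))\to V_j(0)=0$ follows from stability and continuity. However, convergence of the integral of $-\gamma^2\|w_j\|_P^2$, or finiteness of $\cJ$, needs the limit of the integrals up to $T$ to exist. That limit does exist, since it equals $V_j(x)-V_j(y(T))$. A second issue is that in Step 2 the trajectory driven by $w_{j+1}$ must stay in the region where \eqref{eq:aux1} holds. This is fine because $V_j$ solves \eqref{eq:aux1} on all of $\R^d$.
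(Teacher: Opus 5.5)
Your proposal is correct and follows essentially the same route as the paper. You compare $V^{(i,j)}_\gamma$ and $V^{(i,j+1)}_\gamma$ along the trajectory driven by $(u^{(i)}_\gamma,w^{(i,j+1)}_\gamma)$ and use \eqref{eq:aux1} together with $h^t\nabla V^{(i,j)}_\gamma=2\gamma^2Pw^{(i,j+1)}_\gamma$ from \eqref{eq:aux2} to get $V^{(i,j+1)}_\gamma(x)-V^{(i,j)}_\gamma(x)=\gamma^2\int_0^\infty\|w^{(i,j+1)}_\gamma-w^{(i,j)}_\gamma\|_P^2\,dt$; the upper bound and positivity come from the representation $V^{(i,j)}_\gamma(x)=\cJ(u^{(i)}_\gamma,w^{(i,j)}_\gamma;x)$ and the case $w^{(i,0)}_\gamma=0$, exactly as in the paper, and your explicit handling of $T\to\infty$ and of positivity for all $j$ via monotonicity only makes precise what the paper leaves implicit.
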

\begin{proof}
Since $\dot y=f(y)+g(y)u^{(i)}_{\gamma}+h(y)w^{(i,j)}_{\gamma}$ is asymptotically stable for each pair $(i,j)$ and utilizing that $V^{(i,j)}_\gamma(0)=0$,
the difference between  $V^{(i,j)}_\gamma(x)$ and $V^{(i,j+1)}_\gamma(x)$  can be obtained by calculating the time derivative of $V^{(i,j)}_{\gamma}$, $V^{(i,j+1)}_{\gamma}$
along the trajectory of
\begin{align}\label{eq1.6}
	\dot y&= \Big(f(y)+g(y)u^{(i)}_\gamma+h(y)w^{(i,j+1)}_\gamma\Big), \; y(0)=x \in \Omega,
\end{align}
to obtain
\begin{align*}
	&V_\gamma^{(i,j)}(x)-V_\gamma^{(i,j+1)}(x)\\
	&=\int_{0}^{\infty}\Bigg(\Big({\nabla V_\gamma^{(i,j+1)}(y)}^t\big(f(y)+g(y)u^{(i)}_\gamma+h(y)w^{(i,j+1)}_\gamma\big)\Big)\\
	&\qquad
	-\Big({\nabla V^{(i,j)}_\gamma(y)}^t\big(f(y)+g(y)u^{(i)}_\gamma+h(y)w^{(i,j+1)}_\gamma\big)\Big)\Bigg)\; dt,
\end{align*}
where in $y$, $u^{(i)}_\gamma= u^{(i)}_\gamma(y)$, and $w^{(i,j+1)}_\gamma=w^{(i,j+1)}_\gamma(y)$ the dependence on $t$ is suppressed.
From the GHJI equation \eqref{eq:aux1}, it follows that
\begin{align}\label{e2.2}
	\nabla {V^{(i,j+1)}_{\gamma}(y)}^t\Big(f(y)+g(y)u^{(i)}_{\gamma}+h(y)w^{(i,j+1)}_{\gamma}\Big)
	=-\Big(\ell(y)+\norm{u^{(i)}_{\gamma}}^2_R-\gamma^2\norm{w^{(i,j+1)}_{\gamma}}^2_P\Big),
\end{align}
and, suppressing the dependence of $f$, $g$ and $h$ on $y$,
\begin{align}\label{e2.3}
	{\nabla V^{(i,j)}_{\gamma}}^t&\Big(f+gu^{(i)}_{\gamma}+hw^{(i,j+1)}_{\gamma}\Big)\notag\\
	&={\nabla V^{(i,j)}_{\gamma}}^t\Big(f+gu^{(i)}_{\gamma}+hw^{(i,j)}_{\gamma}\Big)+\langle h^t\nabla V^{(i,j)}_{\gamma},w^{(i,j+1)}_{\gamma}-w^{(i,j)}_{\gamma}\rangle\notag\\
	&= -\Big(\ell(y)+\norm{u^{(i)}_{\gamma}}^2_R-\gamma^2\norm{w^{(i,j)}_{\gamma}}^2_P\Big)+\langle 2\gamma^2Pw^{(i,j+1)}_{\gamma},w^{(i,j+1)}_{\gamma}-w^{(i,j)}_{\gamma}\rangle,
\end{align}
where $\nabla V^{(i,j)}_{\gamma}$ is calculated along $y$.
Subtracting \eqref{e2.3} from \eqref{e2.2} we obtain
\begin{align*}
	V^{(i,j)}_{\gamma}(x)-V^{(i,j+1)}_{\gamma}(x)=-\int_{0}^{\infty}\gamma^2\norm{w^{(i,j)}_\gamma-w^{(i,j+1)}_\gamma}^2_P \;dt.
\end{align*}
Hence we get $V^{(i,j)}_{\gamma}(x)\leq V^{(i,j+1)}_{\gamma}(x)$, and the desired monotonicity is established.

To obtain the upper bound, let $V^{(i)}_\gamma$ be as defined in \eqref{eqkk1}. Calling on \eqref{eq:aux1} we obtain
\begin{equation*}
	V_\gamma^{(i,j)}(x)=\int^\infty_0 \big(\ell(y(t))+ \|u^{(i)}_\gamma(y(t))\|^2_R -\gamma^2\|w_\gamma^{(i,j)}(y(t))\|^2_P\big)\, dt = \cJ(u^{(i)}_\gamma,w^{(i,j)}_\gamma;x),
\end{equation*}
where $y=y(u^{(i)}_\gamma,w^{(i,j)}_\gamma)$.
Thus, we have $V_\gamma^{(i,j)}(x) \le V_\gamma^{(i)}(x)$ as desired. In an analogous way,
utilizing that $w^{(i,0)}_\gamma=0$ we find
\begin{equation*}
	V_\gamma^{(i,0)}(x)=\int^\infty_0 \big(\ell(y(t))+ \|u^{(i)}_\gamma(y(t))\|^2_R\big)\, dt \ge 0 ,
\end{equation*}
where $y=y(u^{(i)}_\gamma,w=0)$. Moreover $V_\gamma^{(i,0)}(x)>0$ for $x\neq 0$.
\end{proof}

\begin{remark}\label{rm3}
{\em {
		Since the sequence $V^{(i,j)}_\gamma(x)$ is monotonically increasing and bounded above  for each $x\in \Omega$, it follows that there exists a function denoted by $V^{(i,\infty)}_\gamma$ such that $\lim_{j\to \infty}V^{(i,j)}_\gamma(x)=V^{(i,\infty)}_\gamma(x) $ for every $x\in\Omega$.
		
		If $\Omega$ is bounded and  $V^{(i,\infty)}_\gamma(x)$ is continuous on  $\bar\Omega$ then the convergence is uniform by Dini's theorem.  Alternatively, if $V^{(i,0)}_\gamma$ is uniformly  bounded from below on $\Omega$, then by Lebesgue's monotone convergence theorem $\{V^{(i,j)}_\gamma\}_{j=1}^\infty$ converges in $L^1(\Omega)$ to $V^{(i,\infty)}_\gamma$.
}}
\end{remark}

\begin{remark}\label{rm4}{\em	We can show that $\{w^{(i,j)}_\gamma\}$ converges to $w^{(i,\infty)}_\gamma$ on $\Omega$ under the following assumptions.
	If $\Omega$ is bounded and $\{V^{(i,j)}_{\gamma}\}\in C^1(\Omega)$ satisfy \eqref{eq:aux1}, $V^{(i,\infty)}_\gamma\in C^1(\Omega)\cap C(\bar\Omega)$, and $\{\nabla V^{(i,j)}_\gamma\}$ is equicontinuous in $\Omega$, then it can be shown that $\{\nabla V^{(i,j)}_{\gamma}\}_{j=1}^\infty$ converges pointwise to $\nabla V^{(i,\infty)}_{\gamma}$,
	$\{w^{(i,j)}_\gamma\}$ converges to $w^{(i,\infty)}_\gamma=\frac{1}{2\gamma^2}P^{-1}h(x)^t\nabla V^{(i,\infty)}_{\gamma}(x)$, and  $V^{(i,\infty)}_{\gamma}(x)$ solves the
	HJI equation
	\begin{align}\label{eqhj}
		\nabla V^{(i,\infty)}_{\gamma}(x)^t\big(f+gu^{(i)}_{\gamma}\big)+\frac{1}{4\gamma^2}\nabla V^{(i,\infty)}_{\gamma}(x)^t hP^{-1}h^t\nabla V^{(i,\infty)}_\gamma(x)+\norm{u_\gamma^{(i)}}^2_R+\ell(x)=0.
	\end{align}
	This can be argued similarly as  Proposition 2 in \cite{kk20}.
	Conversely, conditions for the  existence of a   solution $V^{(i,\infty)}_\gamma$ to \eqref{eqhj}, are given in \cite{V92}, for example.
}
\end{remark}

Next we provide sufficient conditions for the monotonicity for the sequence $V^{(i,\infty)}_\gamma$ which was introduced in Remark \ref{rm1}.
\begin{theorem}\label{lm2}
Let $V_\gamma^{(i,\infty)}\in C^1(\R^d)$ satisfy \eqref{eqhj} on $\R^d$,  and assume that
\begin{align}\label{eq1.5}
	\dot y= f(y)+gu^{(i)}_{\gamma}+hw^{(i,\infty)}_{\gamma}, \quad y(0)=x,
\end{align}
is asymptotically stable  on $\Omega$ for each $i$.
Then we have
$0\le  V_\gamma^{(i+1,\infty)}(x)\leq V_\gamma^{(i,\infty)}(x)$ $\forall x\in \Omega$.
\end{theorem}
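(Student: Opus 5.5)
I would follow the same strategy as in the proof of Theorem \ref{hlm2.1}: compare $V^{(i,\infty)}_\gamma$ and $V^{(i+1,\infty)}_\gamma$ by integrating their time derivatives along one fixed closed-loop trajectory. The natural choice is the trajectory generated by the new control $u^{(i+1)}_\gamma$ and the new worst-case disturbance $w^{(i+1,\infty)}_\gamma$, i.e. \eqref{eq1.5} with $i$ replaced by $i+1$. By hypothesis this trajectory is asymptotically stable, so $y(t)\to 0$. Since both value functions are $C^1$ and vanish at the origin, this gives
\begin{equation*}
V^{(i,\infty)}_\gamma(x)-V^{(i+1,\infty)}_\gamma(x)=\int_0^\infty \Big(\nabla V^{(i+1,\infty)}_\gamma(y)-\nabla V^{(i,\infty)}_\gamma(y)\Big)^t\big(f+gu^{(i+1)}_\gamma+hw^{(i+1,\infty)}_\gamma\big)\,dt .
\end{equation*}

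Next I evaluate the two terms in the integrand.

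For the $V^{(i+1,\infty)}_\gamma$ term, equation \eqref{eqhj} at level $i+1$ gives
\begin{equation*}
\nabla V^{(i+1,\infty)t}_\gamma(\cdots)=-\ell-\|u^{(i+1)}_\gamma\|_R^2+\gamma^2\|w^{(i+1,\infty)}_\gamma\|_P^2 .
\end{equation*}
To see this, note that \eqref{eqhj} can be rewritten with the disturbance term $\nabla V^th w-\gamma^2\|w\|_P^2$ evaluated at $w=w^{(i+1,\infty)}_\gamma=\frac{1}{2\gamma^2}P^{-1}h^t\nabla V^{(i+1,\infty)}_\gamma$. At that point the term equals $\frac{1}{4\gamma^2}\nabla V^thP^{-1}h^t\nabla V$.

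For the $V^{(i,\infty)}_\gamma$ term, I add and subtract $u^{(i)}_\gamma$ and $w^{(i,\infty)}_\gamma$ and use \eqref{eqhj} at level $i$. I also substitute $h^t\nabla V^{(i,\infty)}_\gamma=2\gamma^2Pw^{(i,\infty)}_\gamma$ and $g^t\nabla V^{(i,\infty)}_\gamma=-2R\tilde u$, where $\tilde u:=-\frac12R^{-1}g^t\nabla V^{(i,\infty)}_\gamma$ is the unprojected minimizer, so that $u^{(i+1)}_\gamma=\cP_U(\tilde u)$.

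Collecting terms, the disturbance part should give $-\gamma^2\|w^{(i+1,\infty)}_\gamma-w^{(i,\infty)}_\gamma\|_P^2\le 0$, exactly as in Theorem \ref{hlm2.1}. The control part should give
\begin{equation*}
\|u^{(i)}_\gamma\|_R^2-\|u^{(i+1)}_\gamma\|_R^2+2\langle \tilde u,u^{(i+1)}_\gamma-u^{(i)}_\gamma\rangle_R .
\end{equation*}
This expression has to be shown to be nonnegative, up to the sign conventions. Writing $\|a\|^2-\|b\|^2=\|a-b\|^2+2\langle b,a-b\rangle$ with $a=u^{(i)}_\gamma$, $b=u^{(i+1)}_\gamma$, it becomes
\begin{equation*}
\|u^{(i)}_\gamma-u^{(i+1)}_\gamma\|_R^2+2\langle \tilde u-u^{(i+1)}_\gamma,\,u^{(i+1)}_\gamma-u^{(i)}_\gamma\rangle_R .
\end{equation*}
The variational characterization of the $R$-orthogonal projection onto the convex set $U$, namely $\langle \tilde u-\cP_U\tilde u,\,v-\cP_U\tilde u\rangle_R\le 0$ for all $v\in U$, applied with $v=u^{(i)}_\gamma\in U$, shows that the second term is nonnegative.

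I expect the main obstacle to be the disturbance part, not the projection step. The disturbance terms carry the opposite sign, so it must be checked carefully that the cross terms assemble into $-\gamma^2\|w^{(i+1,\infty)}_\gamma-w^{(i,\infty)}_\gamma\|_P^2$ in the $i+1$ versus $i$ difference. If the sign turns out unfavorable, the fallback is a saddle-point argument. Fix the disturbance at $w^{(i+1,\infty)}_\gamma$ and use that $V^{(i,\infty)}_\gamma$ is the maximum over disturbances for control $u^{(i)}_\gamma$, in the spirit of $V^{(i)}_\gamma$ in \eqref{eqkk1} and the upper bound in Theorem \ref{hlm2.1}. Then $\nabla V^{(i,\infty)t}_\gamma h(w^{(i+1,\infty)}_\gamma-w^{(i,\infty)}_\gamma)-\gamma^2\|w^{(i+1,\infty)}_\gamma\|^2_P+\gamma^2\|w^{(i,\infty)}_\gamma\|_P^2$ is $\le 0$ by concavity in $w$, so this part has the favorable sign. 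Combined with the control part, this gives $V^{(i+1,\infty)}_\gamma\le V^{(i,\infty)}_\gamma$.

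Finally, for nonnegativity, Theorem \ref{hlm2.1} at level $i+1$ gives $V^{(i+1,j)}_\gamma\ge 0$ for every $j$, so the pointwise monotone limit $V^{(i+1,\infty)}_\gamma$ is also $\ge 0$. Alternatively, $V^{(i+1,\infty)}_\gamma(x)\ge \cJ(u^{(i+1)}_\gamma,0;x)\ge 0$.
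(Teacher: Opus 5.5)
Your proposal is the paper's proof. You integrate along the trajectory driven by $(u^{(i+1)}_\gamma,w^{(i+1,\infty)}_\gamma)$, use \eqref{eqhj} at level $i+1$ for one term, add and subtract $u^{(i)}_\gamma,w^{(i,\infty)}_\gamma$ with \eqref{eqhj} at level $i$ for the other, close with the projection inequality (your $\tilde u$ is the paper's $-z$, so your inequality is exactly $(z+\cP_U(-z))^tR(\cP_U(-z)-u^{(i)}_\gamma)\le 0$), and take nonnegativity from Theorem \ref{hlm2.1}.

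The one thing to fix is the sign bookkeeping you flagged. In your orientation $V^{(i,\infty)}_\gamma-V^{(i+1,\infty)}_\gamma$, the disturbance contribution is $+\gamma^2\|w^{(i+1,\infty)}_\gamma-w^{(i,\infty)}_\gamma\|_P^2\ge 0$; the paper's $-\gamma^2\|\cdot\|_P^2$ belongs to $V^{(i+1,\infty)}_\gamma-V^{(i,\infty)}_\gamma$. The mechanism is reversed relative to Theorem \ref{hlm2.1}, because here $w^{(i,\infty)}_\gamma$, not the trajectory disturbance, maximizes $w\mapsto(\nabla V^{(i,\infty)}_\gamma)^t h\,w-\gamma^2\|w\|_P^2$. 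So your ``fallback'' is simply the correct computation. It needs only this pointwise concavity, not any max-over-disturbances property of $V^{(i,\infty)}_\gamma$.
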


\begin{remark}\label{rm5}
{\em As a consequence of the previous theorem,   the pointwise convergence of $V_\gamma^{(i,\infty)}$ follows.  The reason why the regularity of $V_\gamma^{(i,\infty)}$ and equation \eqref{eqhj} are assumed on all of $\R^d$ relates to the fact that the forced trajectories emanating from some $x\in \Omega$ need not remain in $\Omega$ in general. It could be an interesting issue of  further work to provide conditions on the problem data which imply a-priori bounds on these forced trajectories.
}
\end{remark}
\begin{proof}

Since \eqref{eq1.5} is asymptotically stable for each $i$, along the trajectory $\dot y= f+gu^{(i+1)}_\gamma+hw^{(i+1,\infty)}_\gamma$, we obtain the difference
between $V^{(i+1,\infty)}_\gamma(x)$ and $V^{(i,\infty)}_\gamma(x)$   as
\begin{align*}
	&V^{(i+1,\infty)}_\gamma(x)-V^{(i,\infty)}_\gamma(x)\\
	&=\int_{0}^{\infty}\Bigg(\Big({\nabla V_\gamma^{(i,\infty)}}^t(f+gu^{(i+1)}_\gamma+hw^{(i+1,\infty)}_\gamma)\Big)
	-\Big({\nabla V^{(i+1,\infty)}_\gamma}^t(f+gu^{(i+1)}_\gamma+hw^{(i+1,\infty)}_\gamma)\Big)\Bigg)\; dt.
\end{align*}
From the HJI equation \eqref{eqhj}, it follows that for $i\geq -1$
\begin{align}\label{ex2.2}
	\nabla {V^{(i+1,\infty)}_{\gamma}}^t\Big(f+gu^{(i+1)}_{\gamma}+hw^{(i+1,\infty)}_{\gamma}\Big)
	=-\Big(\ell(y)+\norm{u^{(i+1)}_{\gamma}}^2_R-\gamma^2\norm{w^{(i+1,\infty)}_{\gamma}}^2_P\Big).
\end{align}
To estimate ${\nabla V^{(i,\infty)}_{\gamma}}^t(f+gu^{(i+1)}_{\gamma}+hw^{(i+1,\infty)}_{\gamma})$, we rewrite it as
\begin{align*}
	\nabla {V^{(i,\infty)}_{\gamma}(y)}^t&\Big(f+gu^{(i+1)}_{\gamma}+hw^{(i+1,\infty)}_{\gamma}\Big)\\
	&=\nabla {V^{(i,\infty)}_{\gamma}(y)}^t\Big(f+gu^{(i)}_{\gamma}+hw^{(i,\infty)}_{\gamma}\Big)+\langle h^t\nabla {V^{(i,\infty)}_{\gamma}(y)}, w^{(i+1,\infty)}_{\gamma}-w^{(i,\infty)}_{\gamma}\rangle\\
	&\quad+\langle g^t\nabla {V^{(i,\infty)}_{\gamma}(y)}, u^{(i+1)}_{\gamma}-u^{(i)}_{\gamma}\rangle.
\end{align*}
With the notation of $z=\frac{1}{2}R^{-1}g^t\nabla V^{(i,\infty)}_{\gamma}$  we obtain $g^t\nabla V^{(i,\infty)}_{\gamma}=2Rz$ and $u^{(i+1)}_\gamma=\cP_{U}(-z)$.
Also from the worst disturbance, it follows that $h^t\nabla V^{(i,\infty)}_{\gamma}=2\gamma^2Pw^{(i,\infty)}_{\gamma}$.
Altogether, we have
\begin{align}\label{e2.5}
	&\nabla {V^{(i,\infty)}_{\gamma}(y)}^t\Big(f(y)+g(y)u^{(i+1)}_{\gamma}(y)+h(y)w^{(i+1,\infty)}_{\gamma}(y)\Big)\notag\\
	&=-\ell(y)-\gamma^2\norm{w^{(i,\infty)}_{\gamma}}^2_P+2\gamma^2P\langle w^{(i,\infty)}_{\gamma},w^{(i+1,\infty)}_{\gamma}\rangle\notag\\
	&\quad-\norm{u^{(i)}_{\gamma}}^2_R+\langle 2Rz,\cP_{U}(-z)-u^{(i)}_\gamma\rangle\notag\\
	&=-\ell(y)+\gamma^2\norm{w^{(i+1,\infty)}_{\gamma}}^2_P-\gamma^2\norm{w^{(i+1,\infty)}_{\gamma}-w^{(i,\infty)}_{\gamma}}^2_P- \|\cP_{U}(-z) \|_R^2\\
	&\quad - \|u^{(i)}_\gamma(y) - \cP_{U}(-z) \|_R^2+ 2(z+ \cP_{U}(-z))^tR(\cP_{U}(-z)-u^{(i)}_\gamma(y))\notag.
\end{align}
With these expressions we obtain
\begin{align*}
	&V^{(i+1,\infty)}_\gamma(x)-V^{(i,\infty)}_\gamma(x)\\
	&=\int_{0}^{\infty}\Bigg(\Big(-\|u^{(i)}_\gamma(y) - \cP_{U}(-z) \|_R^2-\gamma^2\norm{w^{(i+1,\infty)}_{\gamma}-w^{(i,\infty)}_{\gamma}}^2_P\\
	&\qquad + 2(z+ \cP_{U}(-z))^tR(\cP_{U}(-z)-u^{(i)}_\gamma(y))\Big)\Bigg)\; dt.
\end{align*}
Since $(z+ \cP_{U}(-z))^tR(\cP_{U}(-z)-u^{(i)}_\gamma(y))\leq 0$, we get
$V^{(i+1,\infty)}_\gamma(x)\leq V^{(i,\infty)}_\gamma(x)$.

The lower bound $0\le  V^{(i,\infty)}_\gamma(x)$ is a consequence of Theorem \ref{hlm2.1}.
\end{proof}
\begin{remark}\label{rm6}
{\em If $\Omega$ is bounded and $\{V^{(i,\infty)}_{\gamma}\}\in C^1(\Omega)$ satisfy \eqref{eqhj}, $V_\gamma\in C^1(\Omega)\cap C(\bar\Omega)$, and $\{\nabla V^{(i,\infty)}_\gamma\}$ is equicontinuous in $\Omega$, then the pair $(u^{(i)}_{\gamma}, V^{(i,\infty)}_\gamma)$ of control and value function converges to $\Big(u^*=\cP_{U}\Big(-\frac{1}{2}R^{-1}g(x)^t\nabla V_\gamma(x)\Big), V_{\gamma}(x)\Big)$, where $V_{\gamma}(x)$ solves the HJI equation \eqref{hji2}. For a proof, we refer to \cite{kk20}.
}\end{remark}

At last we show the asymptotic stability property of $u^{(i)}_\gamma$ on $\Omega$ for the unperturbed trajectory at each iteration level. Note that $i=0$ is assumed in the presentation of Algorithm \ref{alg:sg1}.

\begin{lemma}\label{xlm2}
If   $V_\gamma^{(i,\infty)}\in C^1(\R^d)$ for $i\ge 0$ and  \eqref{eqhj} holds,
then function $u_\gamma^{(i+1)}$
is an asymptotically  stabilizing control on $\Omega$  for the unperturbed system $\dot y=f(y)+gu$.
\end{lemma}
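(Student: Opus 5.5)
The plan is a Lyapunov argument with $V:=V^{(i,\infty)}_\gamma$ as the candidate Lyapunov function for $\dot y=f(y)+g(y)u^{(i+1)}_\gamma(y)$.

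First, positive definiteness. By Theorem \ref{hlm2.1} and Remark \ref{rm3}, $V$ is the increasing limit of the $V^{(i,j)}_\gamma$. Hence $V(x)\ge V^{(i,0)}_\gamma(x)>0$ for $x\neq 0$, and $V(0)=0$. Radial unboundedness, if global statements are wanted, I would take from the comparison $V\ge V^{(i,0)}_\gamma$ or from Assumption \ref{ass1}. Otherwise I would restrict to sublevel sets of $V$ contained in the stability region containing $\bar\Omega$.

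Second, the derivative of $V$ along the unperturbed closed loop. As in the proof of Theorem \ref{lm2}, set $z=\frac12R^{-1}g^t\nabla V$, so that $g^t\nabla V=2Rz$ and $u^{(i+1)}_\gamma=\cP_U(-z)$. Set also $w:=w^{(i,\infty)}_\gamma=\frac{1}{2\gamma^2}P^{-1}h^t\nabla V$. Then \eqref{eqhj} reads
\[
\nabla V^t(f+gu^{(i)}_\gamma)=-\ell-\|u^{(i)}_\gamma\|_R^2-\gamma^2\|w\|_P^2 .
\]
The last term comes from $\frac{1}{4\gamma^2}\nabla V^thP^{-1}h^t\nabla V=\gamma^2\|w\|_P^2$. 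Therefore
\[
\frac{d}{dt}V(y)=\nabla V^t(f+g\cP_U(-z))=-\ell-\|u^{(i)}_\gamma\|_R^2-\gamma^2\|w\|_P^2+\langle 2Rz,\cP_U(-z)-u^{(i)}_\gamma\rangle .
\]
Repeating the completion of squares used in \eqref{e2.5}, the last term equals
\[
\|u^{(i)}_\gamma\|_R^2-\|\cP_U(-z)\|_R^2-\|u^{(i)}_\gamma-\cP_U(-z)\|_R^2+2(z+\cP_U(-z))^tR(\cP_U(-z)-u^{(i)}_\gamma).
\]
The final summand is $\le 0$ by the variational characterization of the $R$-orthogonal projection, since $u^{(i)}_\gamma\in U$; this needs $u^{(i)}_\gamma(y)\in U$, which holds because $u^{(i)}_\gamma$ is a projection for $i\ge1$ and $u^{(0)}_\gamma$ is assumed admissible. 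Altogether
\[
\frac{d}{dt}V(y)\le -\ell(y)-\|u^{(i+1)}_\gamma\|_R^2-\|u^{(i)}_\gamma-u^{(i+1)}_\gamma\|_R^2-\gamma^2\|w\|_P^2\le-\ell(y)<0 \quad (y\neq0).
\]

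Third, conclude. Since $\dot V\le-\ell$ with $\ell$ positive definite and $V$ positive definite, Lyapunov's theorem gives stability and local asymptotic stability. For attraction from every $x\in\Omega$, trajectories remain in the sublevel set $\{V\le V(x)\}$, and the integrated inequality gives $\int_0^\infty\ell(y)\,dt\le V(x)<\infty$. A LaSalle/Barbalat argument, using uniform continuity of $\ell(y(t))$ from the Lipschitz dynamics on a bounded invariant set, then yields $y(t)\to0$.

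The main obstacle is this last step. It requires boundedness of the sublevel sets of $V$ (compactness), either through radial unboundedness of $V$ or by containing $\{V\le \max_{\bar\Omega}V\}$ in the domain where the estimates hold. I would handle it by invoking radial unboundedness, analogous to Assumption \ref{ass1}, via the lower bound $V\ge V^{(i,0)}_\gamma$. I would also note that $f+gu^{(i+1)}_\gamma$ is only continuous, so existence of trajectories is by Peano, but the Lyapunov estimate applies to any solution.
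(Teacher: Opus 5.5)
Your proposal is correct and is essentially the paper's proof. The paper also takes $V^{(i,\infty)}_\gamma$ as the Lyapunov function, gets positive definiteness from Theorem \ref{hlm2.1}, rewrites the derivative along $\dot y=f+gu^{(i+1)}_\gamma$ via \eqref{eqhj} with $z=\frac12R^{-1}g^t\nabla V^{(i,\infty)}_\gamma$, performs the same completion of squares, and uses $u^{(i)}_\gamma\in U$ in the projection inequality to conclude $\frac{d}{dt}V^{(i,\infty)}_\gamma(y(t))<0$.

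Your extra treatment of attraction from all of $\Omega$ (compact sublevel sets, Barbalat/LaSalle, Peano existence) goes beyond the paper, which stops at the sign of the derivative. Note, however, that radial unboundedness does not follow from $V^{(i,\infty)}_\gamma\ge V^{(i,0)}_\gamma$ for a general positive definite $\ell$, and that bound is only established on $\Omega$, so that step needs a separate assumption.
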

\begin{proof}
We show that $V^{(i,\infty)}_\gamma$ is a strict Lyapunov function for the system \eqref{eq1.2} with $w=0$. By Theorem \ref{hlm2.1} the function  $V^{(i,\infty)}_\gamma$ is positive definite.
Next we take the time derivative of $t\to V^{(i,\infty)}_{\gamma}(y)(t)$ along the trajectory $\dot y=f(y)+gu^{(i+1)}_{\gamma}$. It  can be expressed as
\begin{align*}
	\frac{d}{dt} V^{(i,\infty)}_{\gamma}(y(t))&=\nabla {V^{(i,\infty)}_{\gamma}(y)}^t\Big(f+gu^{(i+1)}_{\gamma}\Big)\\
	&=\nabla {V^{(i,\infty)}_{\gamma}(y)}^t\Big(f+gu^{(i)}_{\gamma}\Big)+\langle g^t\nabla {V^{(i,\infty)}_{\gamma}(y)}, u^{(i+1)}_{\gamma}-u^{(i)}_{\gamma}\rangle.
\end{align*}
Using \eqref{eqhj}, we obtain
\begin{align}\label{eq1.8}
	\frac{d}{dt} V^{(i,\infty)}_{\gamma}(y(t))
	&=-\ell(y)-\frac{1}{4\gamma^2}\nabla V^{(i,\infty)}_{\gamma}(y)^t hP^{-1}h^t\nabla V^{(i,\infty)}_\gamma(y)-\norm{u_\gamma^{(i)}}^2_R\notag\\
	&\qquad+\langle g^t\nabla {V^{(i,\infty)}_{\gamma}(y)}, u^{(i+1)}_{\gamma}-u^{(i)}_{\gamma}\rangle.
\end{align}
Let $z=\frac{1}{2}R^{-1}g^t\nabla V^{(i,\infty)}_{\gamma}$, so that we have $g^t\nabla V^{(i,\infty)}_{\gamma}=2Rz$. Consequently, from the control law \eqref{eq:aux3} we obtain $u^{(i+1)}_\gamma=\cP_{U}(-z)$.
Hence, from \eqref{eq1.8} we obtain
\begin{align*}
	\frac{d}{dt} V^{(i,\infty)}_{\gamma}(y(t))&=\nabla {V^{(i,\infty)}_{\gamma}(y)}^t\Big(f+gu^{(i+1)}_{\gamma}\Big)\\
	&=-\ell(y)-\frac{1}{4\gamma^2}\nabla V^{(i,\infty)}_{\gamma}(y)^t hP^{-1}h^t\nabla V^{(i,\infty)}_\gamma(y)-\|\cP_{U}(-z) \|_R^2\\
	&\quad - \|u^{(i)}_\gamma(y) - \cP_{U}(-z) \|_R^2+ 2(z+ \cP_{U}(-z))^tR(\cP_{U}(-z)-u^{(i)}_\gamma(y)).
\end{align*}
As $u^{(i)}_\gamma(y)\in U$, it follows that  $(z+ \cP_{U}(-z))^tR(\cP_{U}(-z)-u^{(i)}_\gamma(y)) \leq 0$, and therefore we obtain finally
$\frac{d}{dt} V^{(i,\infty)}_{\gamma}(y(t))=\nabla {V^{(i,\infty)}_{\gamma}(y)}^t\Big(f+gu^{(i+1)}_{\gamma}\Big)<0$, and the asymptotic stabilization property of the control follows.
\end{proof}

\section{$\cH_{\infty}$  feedback control for stochastic systems}
In this section, we consider the following infinite horizon stochastic optimal robust control problem:
\begin{align}\label{seq1.1}
\underset{u(\cdot)\in \cU}{\min}\;\underset{w(\cdot)\in \cW}{\max}\;&\cJ(u,w;x):=\E\int\limits_0^\infty \Big(\ell(y(t))+\|u(t)\|_R^2-\gamma^2\|w(t)\|_P^2\Big)\, dt,
\end{align}
subject to the nonlinear stochastic system
\begin{align}\label{eq2.1}
d y&= \Big(f(y)+g(y)u(t)+h(y)w(t)\Big)dt+\Big(g_1(y)+g_2(y)w(t)\Big)dW \,,\quad y(0)=x,
\end{align}
where $y(t)=(y_1(t),\ldots,y_d(t))^t\in\R^d$ is the state vector,  $W(t)\in \R^k$
is a standard $k$-dimensional Wiener process (i.e., with independent components)
defined on a complete probability space. 
The exogenous disturbance $w(\cdot)\in\cW=\cL^2_{\cF}(\R_+;\R^q)$ and the control input
	$u(\cdot)\in\cU=\cL^2_{\cF}(\R_+;U)$, where the subscript $\cF$ indicates that the processes are
	adapted to the natural filtration $\{\cF_t\}$ generated by $W$ and square integrable,
	$\E\int_0^\infty\|\cdot\|^2\,dt<\infty$, see \cite{ZC06}. Since we are interested in closed loop
	control, we note that the strong solution $y$ of \eqref{eq2.1} is $\{\cF_t\}$-adapted, so that
	feedback laws $u(y(t))$, $w(y(t))$ are admissible.
	Further $U$  is a closed convex set containing the origin.
	
	Moreover, for the dynamics, which are characterized by $f$, $h$, $g$  (mentioned on page 3), $g_1(y):\R^d\rightarrow \mathbb{R}^{d\times k}$ and
	$g_2(y)=[g_2^1(y),\dots,g_2^q(y)]$ with $g_2^l(y):\mathbb{R}^d\rightarrow\mathbb{R}^{d\times k}$, where
	$g_2(y)w:=\sum_{l=1}^q w_l\,g_2^l(y)$ for $w\in\mathbb{R}^q$, it is assumed that these functions are Lipschitz continuous on $\R^d$ and that $g$, $h$, $g_1$ and $g_2$ are globally bounded. The analysis is carried out over a  bounded domain  $\Omega\subset \R^d$  containing the origin. Concerning the existence and uniqueness results for \eqref{eq2.1}, we refer to  e.g. \cite[Chapter 2]{M07}.
	Throughout we assume that  $f(0)=0$, $g_1(0)=0$ for $w=0$ and $u=0$.\\
	
	We want  to find a control $u\in\cL^2_{\mathcal{F}}((0,T),U)$
	such that the stochastic system \eqref{eq2.1} is closed loop asymptotically stable in probability and has an $\cL_2$-gain less than or equal to the disturbance attenuation level $\gamma >0$, i.e. for all $T\geq 0$ and $w\in\cL^2_{\mathcal{F}}((0,T),\mathbb{R}^q)$,
	\begin{equation}\label{eq2.2}
\E\int_{0}^{T}\Big(\ell(y)+\norm{u(t)}^2_{R}\Big)\; dt\leq \gamma^2\, \E\int_{0}^{T}\norm{w(t)}^2_{P}\;dt,
\end{equation}
where $y$ is the solution to \eqref{eq2.1} with $x=0$.
We define the value function
$$
V_\gamma(x)=\underset{u(\cdot)\in \cU}{\min}\;\underset{w(\cdot)\in \cW}{\max}\;\cJ(u,w;x).$$
Here, for $V_\gamma\in C^2(\R^d)$, the traces involving $g_2$ are taken columnwise, i.e.
	$Tr[g_2^t\frac{\partial^2V_\gamma}{\partial x^2}g_2]
	:=\big(Tr[(g_2^l)^t\frac{\partial^2V_\gamma}{\partial x^2}g_2^m]\big)_{l,m=1}^q\in\R^{q\times q}$,
	$Tr[g_2^t\frac{\partial^2V_\gamma}{\partial x^2}g_1]
	:=\big(Tr[(g_2^l)^t\frac{\partial^2V_\gamma}{\partial x^2}g_1]\big)_{l=1}^q\in\R^q$ and
	$Tr[g_1^t\frac{\partial^2V_\gamma}{\partial x^2}g_2]:=Tr[g_2^t\frac{\partial^2V_\gamma}{\partial x^2}g_1]^t$,
	and $2\gamma^2P-Tr[g_2^t\frac{\partial^2V_\gamma}{\partial x^2}g_2]>0$ is understood in the sense of
	positive definite matrices.
	\begin{assumption}\label{ass2}
The radially unbounded value function satisfies  $V_\gamma\in C^2(\R^d)$.  Further, for the policy iteration Algorithm \ref{alg:sg2}, we assume that $V^{(i,j)}_{\gamma}(x)\in C^2(\R^d)$ for each $i,\hspace{0.1cm} j\geq 0$.
\end{assumption}

Similar assumptions were made  earlier in \cite{WS02, ZC06}.
If in addition $V_\gamma\ge 0$ satisfies the  HJI equation
\begin{equation}\label{eq2.3}
\Bigggg\{\begin{array}{r@{}l}
	\nabla V_\gamma(x)^t&\Big(f(x)+g(x)\cP_{U}\Big(-\frac{1}{2}R^{-1}g^t\nabla V_\gamma(x)\Big)\Big)\\
	&+ \frac{1}{2}\Big(\nabla V_\gamma(x)^th+Tr[g_1^t\frac{\partial^2 V_\gamma}{\partial x^2}g_2]\Big)(2\gamma^2 P-Tr[g_2^t\frac{\partial^2 V_\gamma}{\partial x^2}g_2])^{-1}\Big(h^t\nabla V_\gamma(x)+Tr[g_2^t\frac{\partial^2 V_\gamma}{\partial x^2}g_1]\Big)\\
	&\quad+\norm{\cP_{U}\Big(-\frac{1}{2}R^{-1}g^t\nabla V_\gamma(x)\Big)}^2_R+\ell(x)+\frac{1}{2}Tr[g_1^t\frac{\partial^2 V_\gamma}{\partial x^2}g_1]=0,\\
	& V_\gamma(0)=0, \quad 2\gamma^2P-Tr[g_2^t\frac{\partial^2 V_\gamma}{\partial x^2}g_2]>0,
\end{array}
\end{equation}
then system \eqref{eq2.1} has a  $\cL_2$-gain less than or equal to $\gamma>0$,  with  optimal control  and optimal disturbance strategies given by
$$u^*_\gamma(x)\!=\!\cP_{U}\Big(\!-\frac12R^{-1}g(x)^t\nabla V_\gamma(x)\!\Big) \text{ and }
w^*_{\gamma}(x)\!=\!\Big(\!2\gamma^2P-Tr[g_2^t\frac{\partial^2 V_\gamma}{\partial x^2}g_2]\Big)^{-1}\!\Big(\!h^t\nabla V_{\gamma}(x)+Tr[g_2^t\frac{\partial^2 V_\gamma}{\partial x^2}g_1]\Big).
$$
For the case without control constraints, we refer to \cite[ Theorem 3.1, Remark 3.1]{ZC06}.
The proofs there can be adjusted to handle the control constrained case which we are treating.
In the remainder of this section we simply write $(u_\gamma,w_{\gamma})$ for  $(u^*_\gamma,w^*_\gamma)$. Moreover, when there is no confusion, we suppress the dependence of $f\;,g\;,h\;,g_1$ and $g_2$ on the state variable.

The  HJI equation related to unconstrained control satisfies
\begin{equation}\label{eqx2.3}
\Bigggg\{\begin{array}{r@{}l}	
	\nabla V_\gamma(x)^t&f(x)-\frac{1}{4}\nabla V_\gamma(x)^tg(x)R^{-1}g(x)^t\nabla V_\gamma(x)\\
	&+ \frac{1}{2}\Big(\nabla V_\gamma(x)^th+Tr[g_1^t\frac{\partial^2 V_\gamma}{\partial x^2}g_2]\Big)(2\gamma^2 P-Tr[g_2^t\frac{\partial^2 V_\gamma}{\partial x^2}g_2])^{-1}\Big(h^t\nabla V_\gamma(x)+Tr[g_2^t\frac{\partial^2 V_\gamma}{\partial x^2}g_1]\Big)\\
	&\qquad+\ell(x)+\frac{1}{2}Tr[g_1^t\frac{\partial^2 V_\gamma}{\partial x^2}g_1]=0\,,\\
	V_\gamma(0)=&0\,,\quad 2\gamma^2P-Tr[g_2^t\frac{\partial^2 V_\gamma}{\partial x^2}g_2]>0.
\end{array}
\end{equation}


For $h=0=g_2$ in \eqref{eq2.3}-\eqref{eqx2.3}, we obtain the corresponding constrained and unconstrained 
HJB equation which arise from stochastic control of dynamical systems
respectively.
The infinitesimal generator of \eqref{eq2.1} with optimal pair $(u_\gamma,w_\gamma)$ is


$\cL_{(u_\gamma,w_\gamma)}V_\gamma=\nabla V_\gamma^t\Big(f+gu_\gamma+hw_\gamma\Big)+\frac{1}{2}Tr[(g_1+g_2w_\gamma)^t\frac{\partial^2 V_\gamma}{\partial x^2}(g_1+g_2w_\gamma)]$.

%
%
%

For the stochastic system \eqref{eq2.1}, $y=0$ is said to be stable in probability, or stochastically stable in the sense of Lyapunov, if for any $\epsilon>0$, $\lim_{x\to 0}P(\sup_{t\geq 0}\norm{y(t)}>\epsilon)=0$.  Further  $y=0$ in \eqref{eq2.1}, is said to be globally asymptotically stable in the sense of Lyapunov in probability, if it is stable in probability and for all $x\in \R^d$, $P(\lim_{t\to \infty} y(t)=0)=1$. A sufficient condition for asymptotically stability in probability in the sense of Lyapunov is as follows: If there exists a positive definite decrescent Lyapunov function $V_\gamma \in C^2(\R^d)\;$ satisfying $\cL_{(u_\gamma,w_\gamma)}V_\gamma<0$ for all non-zero $y\in \R^d$, then the equilibrium point $0$ of \eqref{eq2.1} is asymptotically stable in probability \cite{LH24, M07}

The second order  GHJI equations is of the form
\begin{align}
\Bigg\{\begin{array}{r@{}l}
	GHJI(V_\gamma,\nabla V_\gamma,\nabla^2V_\gamma,u_\gamma;w_\gamma)&=0,\qquad V_\gamma(0)=0,\quad 2\gamma^2P-Tr[g_2^t\frac{\partial^2 V_\gamma}{\partial x^2}g_2]>0 \quad \label{gHJI} \text{where}\\
	GHJI(V_\gamma,\nabla V_\gamma,\nabla^2V_\gamma,u_\gamma;w_\gamma)&:=\cL_{(u_\gamma,w_\gamma)}V_\gamma+\Big(\ell+\norm{u_\gamma}^2_{R}-\gamma^2\norm{w_\gamma}^2_{P}\Big),
\end{array}
\end{align}
and $\nabla V_\gamma(x)=(\px{1}V_\gamma,\ldots,\px{d}V_\gamma)^t$.
Below we discuss policy iteration  where we calculate $w_\gamma$ and $u_\gamma$ iteratively.
\begin{algorithm}[!ht]
Let $u^{(0)}_\gamma$ be an asymptotically stabilizing control law  for the system \eqref{eq2.1} with $w_\gamma \equiv 0$ and $\gamma\ge \gamma^*$.
\begin{algorithmic}
	\STATE{\bf For $i=0$ to $\infty$\;}
	\STATE{\bf\qquad Set $w^{(i,0)}_{\gamma}(x)\equiv 0$,\;}
	\STATE{\bf\qquad For $j=0$ to $\infty$\;}
	\STATE{\bf\hspace{1cm} Solve for $V^{(i,j)}_{\gamma}(x)\in C^2(\R^d):$
		\begin{equation}\label{eq2.4}
			\Bigg\{\begin{array}{r@{}l}
				\cL_{(u^{(i)}_\gamma,w^{(i,j)}_\gamma)}V^{(i,j)}_\gamma(x)+ \Big(\ell+\norm{u^{(i)}_\gamma}^2_{R}-\gamma^2\norm{w^{(i,j)}_{\gamma}}_P^2\Big)=0\,,\\
				V^{(i,j)}_\gamma(0)=0\,,\quad 2\gamma^2P-Tr[g_2^t\frac{\partial^2 V^{(i,j)}_\gamma}{\partial x^2}g_2]>0.
			\end{array}
		\end{equation}
	}
	\vspace{-3mm}
	\STATE{\bf \hspace{1cm} Update the Disturbance:
		\[
		w^{(i,j+1)}_{\gamma}(x)=\Big(2\gamma^2P-Tr[g_2^t\frac{\partial^2 V^{(i,j)}_\gamma}{\partial x^2}g_2]\Big)^{-1}\Big(h^t\nabla V^{(i,j)}_{\gamma}(x)+Tr[g_2^t\frac{\partial^2 V^{(i,j)}_\gamma}{\partial x^2}g_1]\Big)\,,
		\]
		\qquad	j=j+1	\;}
	
	\bf {\qquad End j loop
		
		\STATE{Update the Control:
			\vspace{-3mm}
			\[
			u^{(i+1)}_{\gamma}(x)=\cP_{U} \Big(-\frac{1}{2}R^{-1}g^t\nabla V_\gamma^{(i,\infty)}(x))\Big)\,.
			\]
			i=i+1	\;}
		
		End i loop}
	\caption{Policy iteration Algorithm}\label{alg:sg2}
\end{algorithmic}
\end{algorithm}

The following theorem establishes convergence related to the value function of policy iteration for the inner loop Algorithm \ref{alg:sg2}. To obtain the upper bound for $V^{(i,j)}_\gamma$, similarly as in Theorem \ref{hlm2.1}, we define for  a stabilizing control $u_\gamma^{(i)}$ and $x\in\Omega$
\begin{equation}\label{eqkk2}
\left\{
\begin{array}{ll}
	V^{(i)}_\gamma(x):=\max_{w(\cdot)\in \cW} \cJ(u_\gamma^{(i)},w;x)\\[1.7ex]
	d y= \Big(f(y(t))+g(y)u^{(i)}_{\gamma}(y(t))+h(y)w(t)\Big) dt+\Big(g_1(y)+g_2(y)w(t)\Big)dW\,,\quad y(0)=x.
\end{array}
\right.
\end{equation}
\begin{theorem}\label{lm2.1}
If  the stochastic system
\begin{align}\label{eq2.8}
	d y&= \Big(f(y)+g(y)u^{(i)}_\gamma+h(y)w^{(i,j)}_\gamma\Big)\,dt+\Big(g_1(y)+g_2(y)w^{(i,j)}_\gamma\Big)\,dW, \quad y(0)=x,
\end{align}
is asymptotically stable in the sense of Lyapunov on $\Omega$ for all pairs $(i,j)$, where $i\geq 0$ is fixed and all $j\geq 0$ and $V_\gamma^{(i,j)}\in C^2(\bar \Omega)$, then we get
$V^{(i,j)}_\gamma(x)\leq V^{(i,j+1)}_\gamma(x)\leq V^{(i)}_\gamma(x)$ for each $x\in \Omega$. Further $V^{(i,j)}_\gamma(x)>0$ for $x\neq 0$.
\end{theorem}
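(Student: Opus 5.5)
The plan is to follow the proof of Theorem \ref{hlm2.1}, with the chain rule replaced by Itô's formula (Dynkin's formula) for the generator $\cL_{(u,w)}$. Fix $i$ and $x\in\Omega$, and let $y$ solve \eqref{eq2.8} with $j$ replaced by $j+1$. Apply Itô's formula to $V^{(i,j)}_\gamma(y(t))$ and to $V^{(i,j+1)}_\gamma(y(t))$ on $[0,T]$ and take expectations; the stochastic integrals have zero mean because $g_1,g_2$ are bounded and $V\in C^2$. Then let $T\to\infty$, using asymptotic stability in probability together with $V^{(i,j)}_\gamma(0)=0$ and continuity, so that $\E V(y(T))\to 0$. (One could localize with stopping times, e.g.\ exit times of bounded sets, and pass to the limit by dominated convergence.) This yields
\[
V^{(i,j)}_\gamma(x)-V^{(i,j+1)}_\gamma(x)=\E\int_0^\infty\Big(\cL_{(u^{(i)}_\gamma,w^{(i,j+1)}_\gamma)}V^{(i,j+1)}_\gamma-\cL_{(u^{(i)}_\gamma,w^{(i,j+1)}_\gamma)}V^{(i,j)}_\gamma\Big)(y)\,dt .
\]

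The first term is given directly by \eqref{eq2.4} at level $j+1$. For the second term, expand $\cL_{(u,w')}V^{(i,j)}_\gamma$ around $w=w^{(i,j)}_\gamma$. Write $A:=2\gamma^2P-Tr[g_2^t\frac{\partial^2V^{(i,j)}_\gamma}{\partial x^2}g_2]$, which is positive definite by \eqref{eq2.4}, and $b:=h^t\nabla V^{(i,j)}_\gamma+Tr[g_2^t\frac{\partial^2V^{(i,j)}_\gamma}{\partial x^2}g_1]$, so that $w^{(i,j+1)}_\gamma=A^{-1}b$. As a function of $w$, the map $w\mapsto\cL_{(u,w)}V^{(i,j)}_\gamma-\gamma^2\|w\|_P^2$ is the concave quadratic
\[
c+\langle b,w\rangle-\tfrac12 w^tAw ,
\]
and its maximizer is exactly $w^{(i,j+1)}_\gamma$. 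Completing the square gives
\[
\Phi(w^{(i,j+1)}_\gamma)-\Phi(w^{(i,j)}_\gamma)=\tfrac12\|w^{(i,j+1)}_\gamma-w^{(i,j)}_\gamma\|_A^2 .
\]
Here $\Phi$ denotes $\cL_{(u^{(i)},w)}V^{(i,j)}_\gamma+\ell+\|u^{(i)}\|_R^2-\gamma^2\|w\|_P^2$, and $\Phi(w^{(i,j)})=0$ by \eqref{eq2.4}. Subtracting then yields
\[
V^{(i,j)}_\gamma(x)-V^{(i,j+1)}_\gamma(x)=-\tfrac12\,\E\int_0^\infty\|w^{(i,j+1)}_\gamma-w^{(i,j)}_\gamma\|_A^2\,dt\le 0 .
\]
This is the stochastic analogue of the $\gamma^2\|\cdot\|_P^2$ identity in Theorem \ref{hlm2.1}. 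The key algebraic check is that the $\frac12 Tr[(g_1+g_2w)^t V_{xx}(g_1+g_2w)]$ term contributes precisely the $b$ and $A$ pieces.

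For the upper bound, apply Dynkin's formula to $V^{(i,j)}_\gamma$ along \eqref{eq2.8} itself. Together with \eqref{eq2.4} and stability, this gives $V^{(i,j)}_\gamma(x)=\cJ(u^{(i)}_\gamma,w^{(i,j)}_\gamma;x)\le V^{(i)}_\gamma(x)$, since the feedback $w^{(i,j)}_\gamma(y(t))$ is adapted and hence admissible in \eqref{eqkk2}. For positivity, the same representation with $j=0$ gives
\[
V^{(i,0)}_\gamma(x)=\E\int_0^\infty\big(\ell(y)+\|u^{(i)}_\gamma\|_R^2\big)dt ,
\]
which is $>0$ for $x\neq0$ because the trajectory leaves $0$ (note $g_1(0)=0$ and $f(0)=0$, so $0$ is absorbing only if it is started there) and $\ell>0$ away from the origin. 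Monotonicity then gives $V^{(i,j)}_\gamma\ge V^{(i,0)}_\gamma>0$.

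The main obstacle is justifying the passage $T\to\infty$. This requires that $\E V(y(T))\to0$ and that the local martingale terms vanish in expectation, while only $C^2(\bar\Omega)$ regularity and stability in probability are available. I would handle it by localization with the exit time $\tau_N$ from a ball $B_N$. On $B_N$ the integrands are bounded, so Dynkin's formula holds exactly. Letting $N\to\infty$ and then $T\to\infty$ uses stability in probability and the boundedness of $V$ on compacts, with the integrand's sign (nonnegative running cost plus the square) allowing monotone convergence. Here I will tacitly use, as the paper does in Section 2, that the $V^{(i,j)}_\gamma$ and equation \eqref{eq2.4} are available along the trajectories.
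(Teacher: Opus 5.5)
Your proposal is correct and follows essentially the paper's route. You apply It\^{o}'s formula along the trajectory driven by $w^{(i,j+1)}_\gamma$, use \eqref{eq2.4} for the $V^{(i,j+1)}_\gamma$ term, and expand $\cL_{(u^{(i)}_\gamma,w^{(i,j+1)}_\gamma)}V^{(i,j)}_\gamma$ via the disturbance update; this gives exactly the paper's identity $V^{(i,j)}_\gamma(x)-V^{(i,j+1)}_\gamma(x)=-\tfrac12\E\int_0^\infty(\Delta w)^t\big(2\gamma^2P-Tr[g_2^t\frac{\partial^2V^{(i,j)}_\gamma}{\partial x^2}g_2]\big)(\Delta w)\,dt$ with $\Delta w:=w^{(i,j+1)}_\gamma-w^{(i,j)}_\gamma$, and you handle the upper bound and positivity as in Theorem \ref{hlm2.1}. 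Your completing-the-square view of the concave quadratic in $w$ is a tidier packaging of the paper's term-by-term expansion, and your exit-time localization treats the $T\to\infty$ limit more carefully than the paper, which infers a true martingale from boundedness on $\bar\Omega$ alone.
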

\begin{proof}
We first  calculate the difference   $V^{(i,j+1)}_\gamma-V^{(i,j)}_\gamma$ by It\^{o}'s formula along the trajectory
\begin{align}\label{eq2.7}
	d y&= \Big(f(y)+g(y)u^{(i)}_\gamma+h(y)w^{(i,j+1)}_\gamma\Big)\,dt+\Big(g_1(y)+g_2(y)w^{(i,j+1)}_\gamma\Big)\,dW.
\end{align}
Applying It\^{o}'s formula \cite{LH24} to  $V_\gamma^{(i,j+1)}(y)$ over a finite time interval $[0,T]$, we get
\begin{align}\label{xeq}
	V_\gamma^{(i,j+1)}(y(T)) - V_\gamma^{(i,j+1)}(y(0))
	=\int_0^T \cL_{(u^{(i)}_\gamma,w^{(i,j+1)}_\gamma)}V_\gamma^{(i,j+1)}(y)\; dt +M_T \;,
\end{align}
where $M_T$ is the stochastic integral term $$	M_T = \int_0^T \nabla V_\gamma^{(i,j+1)}(y(t))^t \left(g_1(y(t))+g_2(y(t))w^{(i,j+1)}_\gamma\right) dW(t).$$	
By Assumption \ref{ass2}, $\nabla V_\gamma^{(i,j+1)}(y)$ and $\frac{\partial^2 V^{(i,j)}_\gamma}{\partial y^2}$
are  bounded on $\bar \Omega$, and consequently on $\Omega$. Further, $h(y)$, $g_1(y)$ and $g_2(y)$ are bounded on $\Omega$ by the assumption on the dynamics. As a consequence                           $\left(g_1(y(t))+g_2(y(t))w^{(i,j+1)}_\gamma\right)$  is also bounded on $\Omega$.  Hence $M_T$ is a true martingale. This implies $\E[M_T] = 0$ for any finite $T$. Hence we obtain from \eqref{xeq}
$$	\E V_\gamma^{(i,j+1)}(y(T)) - V_\gamma^{(i,j+1)}(x) = \mathbb{E}\left[\int_0^T \cL_{(u^{(i)}_\gamma,w^{(i,j+1)}_\gamma)}V_\gamma^{(i,j+1)}(y)\; dt\right]. $$
Then,  taking the limit as $T \to \infty$, we obtain $-V_\gamma^{(i,j+1)}(x)=\mathbb{E}\left[\int_0^\infty \cL_{(u^{(i)}_\gamma,w^{(i,j+1)}_\gamma)}V_\gamma^{(i,j+1)}(y)\; dt\right] $. 

Since \eqref{eq2.8} is asymptotically stable on $\Omega$ for each pair $(i,j)$,
by It\^{o}'s formula we find
\begin{align*}
	&V^{(i,j)}_\gamma(x)-V^{(i,j+1)}_\gamma(x)\\
	&=\E\int_{0}^{\infty}\Bigg(\Big({\nabla V^{(i,j+1)}_\gamma}^t(f+gu^{(i)}_\gamma+hw_\gamma^{(i,j+1)})+\frac{1}{2}Tr[(g_1+g_2w_\gamma^{(i,j+1)})^t\frac{\partial^2 V^{(i,j+1)}_\gamma}{\partial y^2}(g_1+g_2w_\gamma^{(i,j+1)})]\Big)\\
	&\quad -\Big({\nabla V^{(i,j)}_\gamma}^t(f+gu^{(i)}_\gamma+hw_\gamma^{(i,j+1)})+\frac{1}{2}Tr[(g_1+g_2w_\gamma^{(i,j+1)})^t\frac{\partial^2 V^{(i,j)}_\gamma}{\partial y^2}(g_1+g_2w_\gamma^{(i,j+1)})]\Big)\Bigg) dt\\
	&\quad =: \E \int_{0}^{\infty}(A-B)\; dt.
\end{align*}
Using the GHJI equation \eqref{eq2.4}, it follows that
$A=-\Big(\ell(y)+\norm{u^{(i)}_\gamma}^2_R-\gamma^2\norm{w^{(i,j+1)}_\gamma}^2_P\Big)$.
Applying the update disturbance $w^{(i,j+1)}_\gamma$ i.e. using
\begin{align*}
	-\Big(2\gamma^2P-Tr[g_2^t\frac{\partial^2 V^{(i,j)}_\gamma}{\partial y^2}g_2]\Big)w^{(i,j+1)}_\gamma+Tr[g_2^t\frac{\partial^2 V^{(i,j)}_\gamma}{\partial y^2}g_1]=-h^t\nabla V^{(i,j)}_{\gamma}(y),
\end{align*}
we get
\begin{align}\label{eq3.1}
	&\nabla V^{(i,j)}_{\gamma}(y)^t\big(f+hw^{(i,j+1)}_{\gamma}+g u^{(i)}_\gamma\big)+\frac{1}{2}Tr[(g_1+g_2w^{(i,j+1)}_\gamma)^t\frac{\partial^2 V^{(i,j)}_\gamma}{\partial y^2}(g_1+g_2w^{(i,j+1)}_\gamma)]\notag\\
	&=-\Big(\ell(y)+\norm{u^{(i)}_\gamma}^2_R-\gamma^2\norm{w^{(i,j)}_\gamma}^2_P\Big)+\langle h^t\nabla V^{(i,j)}_\gamma,w^{(i,j+1)}_\gamma-w^{(i,j)}_\gamma\rangle\notag\\
	&\qquad+\frac{1}{2}Tr[(g_1+g_2w^{(i,j+1)}_\gamma)^t\frac{\partial^2 V^{(i,j)}_\gamma}{\partial y^2}(g_1+g_2w^{(i,j+1)}_\gamma)]\\
	&\qquad-\frac{1}{2}Tr[(g_1+g_2w^{(i,j)}_\gamma)^t\frac{\partial^2 V^{(i,j)}_\gamma}{\partial y^2}(g_1+g_2w^{(i,j)}_\gamma)]\notag\\
	&\hspace{1cm}=-\Big(\ell(y)+\norm{u^{(i)}_\gamma}^2_R-\gamma^2\norm{w^{(i,j)}_\gamma}^2_P\Big)+\Big\langle(2\gamma^2P-Tr[g_2^t\frac{\partial^2 V^{(i,j)}_\gamma}{\partial y^2}g_2])w^{(i,j+1)}_\gamma,\notag\\
	&\hspace{1cm} w^{(i,j+1)}_\gamma-w^{(i,j)}_\gamma\Big\rangle+\Big\langle\frac{1}{2}Tr[g_2^t\frac{\partial^2 V^{(i,j)}_\gamma}{\partial y^2}g_2](w^{(i,j+1)}_\gamma+w^{(i,j)}_\gamma), w^{(i,j+1)}_\gamma-w^{(i,j)}_\gamma\Big\rangle\notag\\
	&=-\Big(\ell(y)+\norm{u^{(i)}_\gamma}^2_R-\gamma^2\norm{w^{(i,j)}_\gamma}^2_P\Big)+\langle 2\gamma^2Pw^{(i,j+1)}_\gamma, w^{(i,j+1)}_\gamma-w^{(i,j)}_\gamma\rangle\notag\\
	&\quad -\tfrac12\big(w^{(i,j+1)}_\gamma-w^{(i,j)}_\gamma\big)^t
			Tr\big[g_2^t\frac{\partial^2V^{(i,j)}_\gamma}{\partial y^2}g_2\big]
			\big(w^{(i,j+1)}_\gamma-w^{(i,j)}_\gamma\big)\notag.
\end{align}
Hence, an estimate for $B$ is obtained from \eqref{eq3.1}.
Altogether, we arrive at
$$A-B=-\big(w^{(i,j+1)}_\gamma-w^{(i,j)}_\gamma\big)^t
		\Big(\gamma^2P-\tfrac12Tr\big[g_2^t\frac{\partial^2V^{(i,j)}_\gamma}{\partial y^2}g_2\big]\Big)
		\big(w^{(i,j+1)}_\gamma-w^{(i,j)}_\gamma\big).$$
Therefore, it follows that
\begin{align*}
	&V^{(i,j)}_\gamma(x)-V^{(i,j+1)}_\gamma(x)\\
	&\quad=-\tfrac12\,\E\int_0^\infty
	\big(w^{(i,j+1)}_\gamma-w^{(i,j)}_\gamma\big)^t
			\Big(2\gamma^2P-Tr\big[g_2^t\frac{\partial^2V^{(i,j)}_\gamma}{\partial x^2}g_2\big]\Big)
			\big(w^{(i,j+1)}_\gamma-w^{(i,j)}_\gamma\big)\,dt\le0,
\end{align*}
where \eqref{gHJI}  was used. Consequently,  $V^{(i,j)}_\gamma(x)\leq V^{(i,j+1)}_\gamma(x)$.
The rest of the proof follows similarly as in Theorem \ref{hlm2.1}.
\end{proof}
\begin{remark}
If $V^{(i,j)}_\gamma\in C^2(\Omega)$ satisfy \eqref{eq2.4}, $V^{(i,\infty)}_\gamma\in C^2(\Omega)\cap C^1(\bar\Omega)$ and $\{\nabla^m V^{(i,j)}_\gamma\}$ is equicontinuous in $\Omega$, then we have $\{\nabla^m V^{(i,j)}_{\gamma}\}$ converges pointwise to $\nabla^m V^{(i,\infty)}_{\gamma}$ for $m=1,\,2$. Further, assuming the boundedness of $\Big(2\gamma^2P-Tr[g_2^t\frac{\partial^2 V^{(i,j)}_\gamma}{\partial x^2}g_2]\Big)^{-1}$ and $\Big(2\gamma^2P-Tr[g_2^t\frac{\partial^2 V^{(i,\infty)}_\gamma}{\partial x^2}g_2]\Big)^{-1}$ on $\Omega$,  $w^{(i,j)}_\gamma$ converges to $w^{(i,\infty)}_\gamma$ as $j\to \infty$.
Moreover, $V^{(i,\infty)}_\gamma(x)$ solves the equation
\begin{align}\label{eq2.9}
	\Biggg\{\begin{array}{r@{}l}
		\nabla V_\gamma(x)^t(f(x)+gu^{(i)}_\gamma)&+ \frac{1}{2}\Big(\nabla V_\gamma(x)^th+Tr[g_1^t\frac{\partial^2 V_\gamma}{\partial x^2}g_2]\Big)(2\gamma^2 P-Tr[g_2^t\frac{\partial^2 V_\gamma}{\partial x^2}g_2])^{-1}\Big(h^t\nabla V_\gamma(x)\\
		&\quad+Tr[g_2^t\frac{\partial^2 V_\gamma}{\partial x^2}g_1]\Big)+\norm{u^{(i)}_\gamma}^2_R+\ell(x)+\frac{1}{2}Tr[g_1^t\frac{\partial^2 V_\gamma}{\partial x^2}g_1]=0,\\
		&2\gamma^2P-Tr[g_2^t\frac{\partial^2 V_\gamma}{\partial x^2}g_2]>0,
	\end{array}
\end{align}
and
\begin{align}\label{worst}
	w^{(i,\infty)}_\gamma=\Big(2\gamma^2P-Tr[g_2^t\frac{\partial^2 V^{(i,\infty)}_\gamma}{\partial x^2}g_2]\Big)^{-1}\Big(h^t\nabla V^{(i,\infty)}_{\gamma}(x)+Tr[g_2^t\frac{\partial^2 V^{(i,\infty)}_\gamma}{\partial x^2}g_1]\Big).
\end{align}
\end{remark}
\begin{proof}
The proof follows similarly from Proposition 4 in \cite{kk20}.
\end{proof}
\begin{theorem}\label{lm2.2}
If the stochastic system
\begin{align}\label{eq2.10}
	d y&= \Big(f(y)+g(y)u^{(i)}_\gamma+h(y)w^{(i,\infty)}_\gamma\Big)\,dt+\Big(g_1(y)+g_2(y)w^{(i,\infty)}_\gamma\Big)\,dW ,
\end{align}
is asymptotically stable in probability on $\Omega$  for all $i$, and $V^{(i,\infty)}_\gamma(x)\in C^2(\R^d)$, then
$0\leq V^{(i+1,\infty)}_\gamma(x)\leq V^{(i,\infty)}_\gamma(x)$ holds for each $x\in \Omega$.
\end{theorem}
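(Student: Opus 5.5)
We follow the proof of Theorem \ref{lm2} and replace the chain rule by It\^{o}'s formula, exactly as in the proof of Theorem \ref{lm2.1}. We work along the trajectory of the system driven by the \emph{new} pair,
\begin{align*}
dy=\big(f+gu^{(i+1)}_\gamma+hw^{(i+1,\infty)}_\gamma\big)\,dt+\big(g_1+g_2w^{(i+1,\infty)}_\gamma\big)\,dW,\quad y(0)=x,
\end{align*}
which is asymptotically stable in probability by hypothesis (\eqref{eq2.10} with $i$ replaced by $i+1$). We apply It\^{o}'s formula on $[0,T]$ to both $V^{(i,\infty)}_\gamma$ and $V^{(i+1,\infty)}_\gamma$. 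Boundedness of $g,h,g_1,g_2$ and of the derivatives of the $C^2$ functions makes the stochastic integrals true martingales. Letting $T\to\infty$ and using $V(0)=0$ together with stability gives
\begin{align*}
V^{(i+1,\infty)}_\gamma(x)-V^{(i,\infty)}_\gamma(x)=\E\int_0^\infty (A-B)\,dt,
\end{align*}
where $A=\cL_{(u^{(i+1)}_\gamma,w^{(i+1,\infty)}_\gamma)}V^{(i,\infty)}_\gamma$ and $B=\cL_{(u^{(i+1)}_\gamma,w^{(i+1,\infty)}_\gamma)}V^{(i+1,\infty)}_\gamma$. Because $V^{(i+1,\infty)}_\gamma$ solves \eqref{eq2.9} with worst disturbance \eqref{worst}, we have $B=-(\ell+\|u^{(i+1)}_\gamma\|_R^2-\gamma^2\|w^{(i+1,\infty)}_\gamma\|_P^2)$.

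For $A$ we write the generator as its value at the old pair $(u^{(i)}_\gamma,w^{(i,\infty)}_\gamma)$ plus increments.
\begin{itemize}
\item The value at the old pair equals $-(\ell+\|u^{(i)}_\gamma\|_R^2-\gamma^2\|w^{(i,\infty)}_\gamma\|_P^2)$ by \eqref{eq2.9}.
\item The control increment is $\langle g^t\nabla V^{(i,\infty)}_\gamma,u^{(i+1)}_\gamma-u^{(i)}_\gamma\rangle$.
\item The disturbance increment consists of the drift term $\langle h^t\nabla V^{(i,\infty)}_\gamma,w^{(i+1,\infty)}_\gamma-w^{(i,\infty)}_\gamma\rangle$ and the quadratic trace difference, handled exactly as in \eqref{eq3.1}.
\end{itemize}
For the control part we set $z=\frac12R^{-1}g^t\nabla V^{(i,\infty)}_\gamma$, so that $u^{(i+1)}_\gamma=\cP_U(-z)$, and repeat the algebra of \eqref{e2.5}. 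This produces $-\|\cP_U(-z)\|_R^2-\|u^{(i)}_\gamma-\cP_U(-z)\|_R^2+2(z+\cP_U(-z))^tR(\cP_U(-z)-u^{(i)}_\gamma)$. For the disturbance part we use \eqref{worst} in the form $h^t\nabla V^{(i,\infty)}_\gamma=(2\gamma^2P-M)w^{(i,\infty)}_\gamma-Tr[g_2^t\frac{\partial^2 V^{(i,\infty)}_\gamma}{\partial x^2}g_1]$, where $M=Tr[g_2^t\frac{\partial^2V^{(i,\infty)}_\gamma}{\partial x^2}g_2]$. The cross term with $g_1$ then cancels against the trace difference, as in Theorem \ref{lm2.1}. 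Completing the square, the disturbance part equals $\gamma^2\|w^{(i+1,\infty)}_\gamma\|_P^2-\gamma^2\|w^{(i,\infty)}_\gamma\|_P^2$ minus $\tfrac12(w^{(i+1,\infty)}_\gamma-w^{(i,\infty)}_\gamma)^t(2\gamma^2P-M)(w^{(i+1,\infty)}_\gamma-w^{(i,\infty)}_\gamma)$.

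Collecting terms, $\ell$ and the $\gamma^2\|w^{(i+1,\infty)}_\gamma\|^2_P$ terms cancel. What remains is
\begin{align*}
A-B=&-\|u^{(i)}_\gamma-\cP_U(-z)\|_R^2+2(z+\cP_U(-z))^tR(\cP_U(-z)-u^{(i)}_\gamma)\\
&-\tfrac12(w^{(i+1,\infty)}_\gamma-w^{(i,\infty)}_\gamma)^t(2\gamma^2P-M)(w^{(i+1,\infty)}_\gamma-w^{(i,\infty)}_\gamma).
\end{align*}
The first term is nonpositive. The second is nonpositive by the variational characterization of the $R$-orthogonal projection, since $u^{(i)}_\gamma\in U$. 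The third is nonpositive by the positivity constraint $2\gamma^2P-M>0$ in \eqref{eq2.9}. Hence $V^{(i+1,\infty)}_\gamma\le V^{(i,\infty)}_\gamma$. The lower bound $0\le V^{(i+1,\infty)}_\gamma$ follows from Theorem \ref{lm2.1}, as the limit of a nondecreasing sequence starting at $V^{(i+1,0)}_\gamma\ge0$.

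The main obstacle is the bookkeeping of the second-order terms in $A$. The sign of the disturbance square needs the constraint $2\gamma^2P-M>0$ for the \emph{old} Hessian $\frac{\partial^2 V^{(i,\infty)}_\gamma}{\partial x^2}$, even though the trajectory is driven by $w^{(i+1,\infty)}_\gamma$. So the completion of the square must be organized around $V^{(i,\infty)}_\gamma$ rather than $V^{(i+1,\infty)}_\gamma$, which is the same trick as in \eqref{eq3.1}. A secondary technical point is justifying $\E V(y(T))\to0$ as $T\to\infty$ and the martingale property when trajectories may leave $\Omega$. We would handle this by stopping at the exit time of a large ball and using the $C^2(\R^d)$ regularity together with stability in probability.
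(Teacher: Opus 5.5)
Your proposal is correct and follows the paper's proof essentially step for step. You apply It\^{o}'s formula along the trajectory driven by $(u^{(i+1)}_\gamma,w^{(i+1,\infty)}_\gamma)$, evaluate the generator of $V^{(i+1,\infty)}_\gamma$ via \eqref{eq2.9}, and split the generator of $V^{(i,\infty)}_\gamma$ into its value at the old pair plus control and disturbance increments; this yields the same three nonpositive terms, with your $A$ and $B$ being the paper's $B$ and $A$ and the sign of the integral flipped to match.
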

\begin{proof}
Due to  asymptotical stability of \eqref{eq2.10} for each $i$, along the trajectory $d y= \Big(f(y)+g(y)u^{(i+1)}_\gamma+h(y)w^{(i+1,\infty)}_\gamma\Big)\,dt+\Big(g_1(y)+g_2(y)w^{(i+1,\infty)}_\gamma\Big)\,dW$, we obtain the difference
between $V^{(i+1,\infty)}_\gamma(x)$ and $V^{(i,\infty)}_\gamma(x)$   as
\begin{align*}
	&V^{(i+1,\infty)}_\gamma(x)-V^{(i,\infty)}_\gamma(x)\\
	&=-\E\int_{0}^{\infty}\Bigg(\Big({\nabla V^{(i+1,\infty)}_\gamma}^t
	(f+gu^{(i+1)}_\gamma+hw^{(i+1,\infty)}_\gamma)+\frac{1}{2}Tr[(g_1+g_2w^{(i+1,\infty)}_\gamma)^t\frac{\partial^2 V^{(i+1,\infty)}_\gamma}{\partial y^2}\\
	&\qquad(g_1+g_2w^{(i+1,\infty)}_\gamma)]\Big)-\Big({\nabla V^{(i,\infty)}_\gamma}^t
	(f+gu^{(i+1)}_\gamma+hw^{(i+1,\infty)}_\gamma)\\
	&\hspace{2cm}+\frac{1}{2}Tr[(g_1+g_2w^{(i+1,\infty)}_\gamma)^t\frac{\partial^2 V^{(i,\infty)}_\gamma}{\partial y^2}(g_1+g_2w^{(i+1,\infty)}_\gamma)]\Big)\Bigg) \;dt.
	=: -\E \int_{0}^{\infty}(A-B)\; dt.
\end{align*}
Using the HJI equation \eqref{eq2.9}, it follows that
$A=-\Big(\ell(y)+\norm{u^{(i+1)}_\gamma}^2_R-\gamma^2\norm{w^{(i+1,\infty)}_\gamma}^2_P\Big)$, where $w^{(i+1,\infty)}_\gamma$ satisfies \eqref{worst}.
To obtain an estimate of $B$, we express $B$ as follows
\begin{align*}
	&{\nabla V^{(i,\infty)}_\gamma}^t
	(f+gu^{(i+1)}_\gamma+hw^{(i+1,\infty)}_\gamma)+\frac{1}{2}Tr[(g_1+g_2w^{(i+1,\infty)}_\gamma)^t\frac{\partial^2 V^{(i,\infty)}_\gamma}{\partial y^2}(g_1+g_2w^{(i+1,\infty)}_\gamma)]\\
	&\quad=
	{\nabla V^{(i,\infty)}_\gamma}^t(f+gu^{(i)}_\gamma+hw^{(i,\infty)}_\gamma)+
	\frac{1}{2}Tr[(g_1+g_2w^{(i,\infty)}_\gamma)^t\frac{\partial^2 V^{(i,\infty)}_\gamma}{\partial y^2}(g_1+g_2w^{(i,\infty)}_\gamma)]\\
	&\qquad+{\nabla V^{(i,\infty)}_\gamma}^tg(u^{(i+1)}_\gamma-u^{(i)}_\gamma)+\Big\langle h^t{\nabla V^{(i,\infty)}_\gamma}+Tr[g_2^t\frac{\partial^2 V^{(i,\infty)}_\gamma}{\partial y^2}g_1],(w^{(i+1,\infty)}_\gamma-w^{(i,\infty)}_\gamma)\Big\rangle\\
	&\qquad+\Big\langle\tfrac12Tr\big[g_2^t\frac{\partial^2V^{(i,\infty)}_\gamma}{\partial y^2}g_2\big]
			\big(w^{(i+1,\infty)}_\gamma+w^{(i,\infty)}_\gamma\big),\
			w^{(i+1,\infty)}_\gamma-w^{(i,\infty)}_\gamma\Big\rangle\\
	&\quad=-\Big(\ell(y)+\norm{u^{(i)}_\gamma}^2_R-\gamma^2\norm{w^{(i,\infty)}_\gamma}^2_P\Big)+\langle 2Rz, \cP_U(-z)-u^{(i)}_\gamma\rangle\\
	&\qquad+\Big\langle(2\gamma^2P-Tr[g_2^t\frac{\partial^2 V^{(i,\infty)}_\gamma}{\partial y^2}g_2])w^{(i,\infty)}_\gamma+\frac{1}{2}Tr[g_2^t\frac{\partial^2 V^{(i,\infty)}_\gamma}{\partial y^2}g_2](w^{(i+1,\infty)}_\gamma+w^{(i,\infty)}_\gamma),\\
	&\hspace{2cm} w^{(i+1,\infty)}_\gamma-w^{(i,\infty)}_\gamma\Big\rangle\\
	&=-\Big(\ell(y)+\norm{\cP_U(-z)}^2_R-\gamma^2\norm{w^{(i+1,\infty)}_\gamma}^2_P\Big)-\norm{u^{(i)}_\gamma-\cP_U(-z)}^2_R\\
	&\qquad+2(z+\cP_U(-z))^tR(\cP_U(-z)-u^{(i)}_\gamma)\\
	&\qquad-\big(w^{(i+1,\infty)}_\gamma-w^{(i,\infty)}_\gamma\big)^t
			\Big(\gamma^2P-\tfrac12Tr\big[g_2^t\frac{\partial^2V^{(i,\infty)}_\gamma}{\partial y^2}g_2\big]\Big)
			\big(w^{(i+1,\infty)}_\gamma-w^{(i,\infty)}_\gamma\big),
\end{align*}
where as earlier we denote $z=\frac{1}{2}R^{-1}g^t\nabla V^{(i,\infty)}_{\gamma}$, $g^t\nabla V^{(i,\infty)}_{\gamma}=2Rz$, and $u^{(i+1)}_\gamma=\cP_{U}(-z)$.
%
Finally, we arrive at
\begin{align*}
	&V^{(i+1,\infty)}_\gamma(x)-V^{(i,\infty)}_\gamma(x)\\
	&=-\E\int_0^\infty\Big(\big(w^{(i+1,\infty)}_\gamma-w^{(i,\infty)}_\gamma\big)^t
			\Big(\gamma^2P-\tfrac12Tr\big[g_2^t\frac{\partial^2V^{(i,\infty)}_\gamma}{\partial y^2}g_2\big]\Big)
			\big(w^{(i+1,\infty)}_\gamma-w^{(i,\infty)}_\gamma\big)
	\Big)dt\\
	&\qquad+\E\int_{0}^{\infty}\Big( 2(z+\cP_U(-z))^tR(\cP_U(-z)-u^{(i)}_\gamma) -\big\|u^{(i)}_\gamma-P_U(-z)\big\|_R^2\Big)\,dt\leq 0.
\end{align*}
Therefore, we get $V^{(i+1,\infty)}_\gamma(x)\leq V^{(i,\infty)}_\gamma(x)$.
Further $V^{(i,\infty)}_\gamma(x)\geq 0$ follows from Theorem \ref{lm2.1}.
\end{proof}
\begin{remark}
If $V^{(i,\infty)}_\gamma\in C^2(\Omega)$ satisfy \eqref{eq2.4}, $V_\gamma\in C^2(\Omega)\cap C^1(\bar\Omega)$ and $\{\nabla^m V^{(i,\infty)}_\gamma\}$ is equicontinuous, then we have $\{\nabla^m V^{(i,\infty)}_{\gamma}\}$ converges pointwise to $\nabla^m V_{\gamma}$ for $m=1,\,2$  in $\Omega$,   where $V_\gamma$ satisfies \eqref{eq2.3}.
\end{remark}
\begin{proof}
See proposition 4 in \cite{kk20}
for the proof in an analogous situation.
\end{proof}
\begin{lemma}\label{lm2.3}
If   $V_\gamma^{(i,\infty)}\in C^2(\R^d)$ for all $i\ge 0$ and  \eqref{eq2.9} holds,
then $u_\gamma^{(i+1)}$
is an asymptotically  stabilizing control on $\Omega$ for the undisturbed system ($w=0$), $d y=(f(y)+gu)\, dt+g_1(y)\,dW$.
\end{lemma}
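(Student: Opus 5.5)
We argue as in Lemma \ref{xlm2} and show that $V^{(i,\infty)}_\gamma$ is a stochastic Lyapunov function for the undisturbed closed-loop system $dy=(f+gu^{(i+1)}_\gamma)\,dt+g_1\,dW$. Positive definiteness comes from Theorem \ref{lm2.1}: $V^{(i,\infty)}_\gamma$ is the pointwise limit of the nondecreasing sequence $V^{(i,j)}_\gamma$, and $V^{(i,0)}_\gamma>0$ for $x\neq0$, so $V^{(i,\infty)}_\gamma(x)\ge V^{(i,0)}_\gamma(x)>0$ for $x\neq 0$ and $V^{(i,\infty)}_\gamma(0)=0$. Radial unboundedness, as in Assumption \ref{ass2}, supplies the decrescent/properness requirement in the sufficient condition quoted before \eqref{gHJI}. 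It therefore remains to show $\cL_{(u^{(i+1)}_\gamma,0)}V^{(i,\infty)}_\gamma(y)<0$ for $y\neq0$.

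To compute the generator, write
\[
\cL_{(u^{(i+1)}_\gamma,0)}V^{(i,\infty)}_\gamma=\nabla {V^{(i,\infty)}_\gamma}^t(f+gu^{(i)}_\gamma)+\tfrac12 Tr[g_1^t\tfrac{\partial^2V^{(i,\infty)}_\gamma}{\partial y^2}g_1]+\langle g^t\nabla V^{(i,\infty)}_\gamma,u^{(i+1)}_\gamma-u^{(i)}_\gamma\rangle .
\]
Substitute \eqref{eq2.9} for the first two terms, which gives
\[
-\ell-\|u^{(i)}_\gamma\|_R^2-\tfrac12 a^t M^{-1}a,
\]
where $a=h^t\nabla V^{(i,\infty)}_\gamma+Tr[g_2^t\frac{\partial^2V^{(i,\infty)}_\gamma}{\partial y^2}g_1]$ and $M=2\gamma^2P-Tr[g_2^t\frac{\partial^2V^{(i,\infty)}_\gamma}{\partial y^2}g_2]>0$. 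Since $M^{-1}>0$, this term satisfies $-\tfrac12 a^tM^{-1}a\le0$.

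For the control part, set $z=\frac12R^{-1}g^t\nabla V^{(i,\infty)}_\gamma$, so that $u^{(i+1)}_\gamma=\cP_U(-z)$. Repeating the algebra in Lemma \ref{xlm2} gives
\[
-\|u^{(i)}_\gamma\|_R^2+\langle 2Rz,\cP_U(-z)-u^{(i)}_\gamma\rangle=-\|\cP_U(-z)\|_R^2-\|u^{(i)}_\gamma-\cP_U(-z)\|_R^2+2(z+\cP_U(-z))^tR(\cP_U(-z)-u^{(i)}_\gamma).
\]
The last term is $\le0$ by the variational characterization of the $R$-orthogonal projection onto the convex set $U$, because $u^{(i)}_\gamma\in U$. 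Combining, $\cL V^{(i,\infty)}_\gamma\le-\ell(y)<0$ for $y\neq0$, and asymptotic stability in probability follows from the cited criterion \cite{LH24,M07}.

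The main obstacle I expect is the sign bookkeeping: one must check that the disturbance contribution in \eqref{eq2.9} is exactly the completed-square quadratic form with the positive definite $M^{-1}$, and that $g_1(0)=0$ together with $f(0)=0$ keeps the origin an equilibrium of the undisturbed system. A further point is that the Lyapunov criterion is global, while the conclusion is stated on $\Omega$. I would rely on \eqref{eq2.9} holding on $\R^d$ and $V^{(i,\infty)}_\gamma\in C^2(\R^d)$, exactly as was done for the deterministic case in Remark \ref{rm5}.
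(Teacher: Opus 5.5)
Your proposal is correct and follows the paper's proof essentially step for step: positive definiteness of $V^{(i,\infty)}_\gamma$ via Theorem~\ref{lm2.1}; splitting the generator at $u^{(i)}_\gamma$; substituting \eqref{eq2.9} to obtain the nonpositive term $-\tfrac12 a^tM^{-1}a$; and handling the control difference with the same projection identity and variational inequality for $\cP_U$, which gives $\cL_{(u^{(i+1)}_\gamma,0)}V^{(i,\infty)}_\gamma\le-\ell(y)<0$ for $y\neq0$. As a minor aside, decrescence already follows from continuity and $V^{(i,\infty)}_\gamma(0)=0$, whereas Assumption~\ref{ass2} gives radial unboundedness only for $V_\gamma$ and not for $V^{(i,\infty)}_\gamma$, a point the paper also leaves implicit.
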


\begin{proof}
From Theorem \ref{lm2.1}, $V^{(i,\infty)}_\gamma$ is positive definite.
Now along the trajectory $d y=(f(y)+gu^{(i+1)}_{\gamma})\, dt+g_1(y)\,dW$, the time derivative of $t\to V^{(i,\infty)}_{\gamma}(y)(t)$ can be obtained as
\begin{align*}
	&{\nabla V^{(i,\infty)}_\gamma}^t
	(f+gu^{(i+1)}_\gamma)+\frac{1}{2}Tr[(g_1)^t\frac{\partial^2 V^{(i,\infty)}_\gamma}{\partial y^2}(g_1)]\\
	&\quad=
	{\nabla V^{(i,\infty)}_\gamma}^t(f+gu^{(i)}_\gamma)+
	\frac{1}{2}Tr[(g_1)^t\frac{\partial^2 V^{(i,\infty)}_\gamma}{\partial y^2}(g_1)]+{\nabla V^{(i,\infty)}_\gamma}^tg(u^{(i+1)}_\gamma-u^{(i)}_\gamma).
\end{align*}
From \eqref{eq2.9}, it follows that
\begin{align*}
	&\nabla V^{(i,\infty)}_\gamma(y)^t(f(y)+gu^{(i)}_\gamma)+\frac{1}{2}Tr[g_1^t\frac{\partial^2 V^{(i,\infty)}_\gamma}{\partial y^2}g_1]\\=-& \frac{1}{2}\Big(\nabla V^{(i,\infty)}_\gamma(y)^th+Tr[g_1^t\frac{\partial^2 V^{(i,\infty)}_\gamma}{\partial y^2}g_2]\Big)(2\gamma^2 P-Tr[g_2^t\frac{\partial^2 V^{(i,\infty)}_\gamma}{\partial y^2}g_2])^{-1}\Big(h^t\nabla V^{(i,\infty)}_\gamma(y)\\
	&\qquad+Tr[g_2^t\frac{\partial^2 V^{(i,\infty)}_\gamma}{\partial y^2}g_1]\Big)-\norm{u^{(i)}_\gamma}^2_R-\ell(y).
\end{align*}
Also ${\nabla V^{(i,\infty)}_\gamma}^tg(u^{(i+1)}_\gamma-u^{(i)}_\gamma)$ is estimated in Theorem \ref{lm2.2}.
Hence,  we get
\begin{align*}
	&{\nabla V^{(i,\infty)}_\gamma}^t
	(f+gu^{(i+1)}_\gamma)+\frac{1}{2}Tr[(g_1)^t\frac{\partial^2 V^{(i,\infty)}_\gamma}{\partial y^2}(g_1)]\\
	&\quad=-\ell(y)-\norm{\cP_U(-z)}^2_R-\norm{u^{(i)}_\gamma-\cP_U(-z)}^2_R+2(z+\cP_U(-z))^tR(\cP_U(-z)-u^{(i)}_\gamma)\\
	&\qquad-\frac{1}{2}\Big(\nabla V^{(i,\infty)}_\gamma(y)^th+Tr[g_1^t\frac{\partial^2 V^{(i,\infty)}_\gamma}{\partial y^2}g_2]\Big)(2\gamma^2 P-Tr[g_2^t\frac{\partial^2 V^{(i,\infty)}_\gamma}{\partial y^2}g_2])^{-1}\Big(h^t\nabla V^{(i,\infty)}_\gamma(y)\\
	&\qquad+Tr[g_2^t\frac{\partial^2 V^{(i,\infty)}_\gamma}{\partial y^2}g_1]\Big)< 0,
\end{align*}
and the asymptotic stabilizing property of $u^{(i+1)}_\gamma$ follows.
\end{proof}

\section{Numerical examples}
In this section we solve the HJI equation by policy iteration Algorithm \ref{alg:sg1} for the deterministic, and by Algorithm \ref{alg:sg2} for the stochastic case. An implicit upwind scheme is used for  solving backward PDEs, and a bisection algorithm is employed for finding $\gamma^*$. For more details on the upwind scheme, see \cite{Aal17} and \cite{kk20}.
To address the difficulty arising from  the need to  initialize Algorithm \ref{alg:sg1} with an asymptotically stabilizing solution also  in  the case  when the origin is not asymptotically stable,  a discount factor $\lambda>0$  is introduced in the cost. With this change the algorithm always converged satisfactorily. The result thus obtained can be used to initialize the algorithm without discount factor. An alternative possibility for initialization is provided by the choice given by the Riccati feedback law, which is obtained from considering the linearized system. For each of the numerical tests that we shall describe below, the stability properties of the steady state and its consequences for the initialization will be addressed.

For completeness,  we briefly discuss the upwind scheme that is used, where for simplicity, we focus on the 1D deterministic model. For a fixed $\gamma$ we describe the scheme to solve the HJI equation \eqref{eq:aux1}. Let $(u,V)$ be the initial  pair, suppressing the dependence on $\gamma$.
Let $n$ stand for the iteration/time loop, let $i$ denote the  numbering of the mesh points $x_i$, and  let $dt$ stand for the time step. Set $V_i=V(x_i)$, $u_i=u(x_i)$ and $w_i=w(x_i)$ with $i=1,\ldots, I$, where $I$ is the total number of mesh points.
We use the forward difference operator $\nabla V_{i,F}=\frac{V_{i+1}-V_i}{\Delta x}$, $w_{i,F}=\frac{P^{-1}}{2\gamma^2}\nabla V_{i,F}$ whenever the drift  $S_{i,F}=f(x_i)+g(x_i)u_i+h(x_i)w_{i,F}>0$, and the backward difference operator $\nabla V_{i,B}=\frac{V_{i}-V_{i-1}}{\Delta x}$,  if the drift $S_{i,B}=f(x_i)+g(x_i)u_i+h(x_i)w_{i,B}<0$, and $ D\bar V_i=2P\gamma^2\bar w_i$ with $\bar w_i=-f(x_i)/h(x_i)$ for $S_{i,F}\leq 0\leq S_{i,B}$, where $\Delta x=x_{i+1}-x_{i}$ is the mesh size.
Then an approximation of the derivative of the value function can be constructed as
$$\nabla V^n_{iupwind}=\nabla V^n_{i,F}\one_{S_{i,F}>0}+\nabla V^n_{i,B}\one_{S_{i,B}<0}+ D\bar V^n_i\one_{S_{i,F}\leq 0\leq S_{i,B}},$$ where $\one$ is the characteristic function.
Finally, taking $\ell(x)=x^2$, we solve
\begin{align}\label{eq3.3}
-\frac{(V^{n}_i-V^{(n-1)}_i)}{dt}&-\lambda V^n_i+\nabla V^n_{iupwind}(f(x_i)+gu^n_i+hw^n_i)\notag\\
&+x_i^2+\norm{u^n_i}^2_R-\gamma^2\norm{w^n_i}^2_{P}=0.
\end{align}
Further, we get the  disturbance update
$w^{n+1}_i=\frac{1}{2\gamma^2}P^{-1}h(x_i)^t\nabla V^n_{iupwind}$. Once the disturbance loop is completed by satisfying the tolerance, we update the control similarly by considering the upwind derivative from the value function obtained after completing the disturbance loop.
Subsequently,  using a bisection algorithm we can get the smallest value of $\gamma$, and the corresponding value of  $u_{\gamma^*}$, $w_{\gamma^*}$ and $V_{\gamma^*}$.
Similarly, for the stochastic case, the additional second order term is discretized by  $D^2V^n_i=(V^n_{i+1}-2V^n_i+V^n_{i-1})/\Delta x^2$, and an  analogous upwind scheme is used.

\subsection{Test 1: 1D equation}
In the first example we test a linear unstable system.  We first employ as discount factor $\lambda=0.005$. Then the upwind solutions for $\lambda=0.005$  and that  for $\lambda=0$ are compared, where in both cases we take the Riccati solution as the initialization. We also report on  the difference  between the upwind and the Riccati solutions.

We consider
$$\underset{u(\cdot)\in \cU}{\min} \underset{w(\cdot)\in \cW}{\max}\; \cJ(u(\cdot),w(\cdot);(x_0))=\int_{0}^{\infty}e^{-\lambda t}\Big(\norm{x}^2+\norm{u}^2_{R}-\gamma^2\norm{w}^2_P\Big) dt,$$
subject to the  one dimensional  equation
\begin{align}\label{e4.1}
\frac{dx}{dt}=0.5x+u+w, \quad \quad x(0)=x_0.
\end{align}
Further we choose  $R=0.1$, $P=1$, $dt=0.005$, $\Omega=(-2,2)$ and $\gamma=1$.
\begin{table}[ht]
\centering
\caption{$L^2$ and $L^\infty$ errors for the upwind scheme in Test 1 with $\lambda=0$.}
\begin{tabular}{c c c c c}
	\hline
	$Mesh points$ & $\norm{V-V_{upwind}}_{L^2}$ & Conv. order & $\norm{V-V_{upwind}}_{L^\infty}$ & Conv. order
	\\  \hline \hline
	$200$ & $0.07735$            &      &  $0.009425$       &    \\ \hline
	$400$ & $0.017396$             & $2.15$ &  $0.0015028$       & $2.64$   \\ \hline
	$600$ & $0.0011153$            & $6.77$ & $7.87\times 10^{-5}$      &  $7.27$    \\ \hline
	$800$ & $9.65\times 10^{-4}$            & $0.503$ & $5.9\times 10^{-5}$     & $1.002$   \\ \hline
	$1000$ & $8.63 \times 10^{-4}$ & $0.502$ &  $4.72\times 10^{-5}$      &  $1.001$  \\ \hline
	$1200$ & $7.87\times 10^{-4}$ & $0.502$&  $3.93\times 10^{-5}$       &  $1.001$   \\ \hline
	$1400$ & $7.29\times 10^{-4}$ &$0.502$ &$3.37\times 10^{-5}$&    $1.001$  \\ \hline
	\label{table:1}
\end{tabular}
\end{table}
Let  $e_\infty:=\norm{V-V_{upwind}}_{L^\infty}$ and   $e_2:=\norm{V-V_{upwind}}_{L^2}$
denote the  $L^\infty$ and $L^2$ norm errors of the  value function,  where $V$ corresponds to the Riccati solution. We calculate the
order of convergence $\alpha:=
ln(e_{new}/e_{old})/ln(h_{new}/h_{old})$, where $e_{new}$ and $e_{old}$ correspond to the current and previous steps, respectively, and $h_{new}$, $h_{old}$ are the current and previous step mesh sizes respectively. Since $h$ is inversely proportional to the number of mesh points $I$, the  convergence order can be written as $\alpha=ln(e_{new}/e_{old})/ln(I_{old}/I_{new})$.	
%
Next we solve with $\lambda=0$ in the value function,  and compare the corresponding upwind solution with the  $\lambda=0.005$ case.
From Table \ref{table:1} we see that the $L^2$ and $L^\infty$ order of convergence for $V-V_{upwind}$ are $0.5$ and $1$, respectively. Moreover,  we observe that the difference between the value function for $\lambda=0$ and $\lambda=0.005$  is
$\norm{V_0-V_{0.005}}_{L^2}=5.93\times 10^{-4}$ and $\norm{V_0-V_{0.005}}_{L^\infty}=3.82\times 10^{-5}$, where $V_\lambda$ denotes the upwind value function with the corresponding $\lambda$. 	
\subsection{Test 2: 2D nonlinear equation}

We consider next the following minimization problem
$$\underset{u(\cdot)\in \cU}{\min} \underset{w(\cdot)\in \cW}{\max}\; \cJ(u(\cdot),w(\cdot);(x_0,y_0))=\int_{0}^{\infty}e^{-\lambda t}\Big(\norm{x}^2+\norm{y}^2+\norm{u}^2_{R}-\gamma^2\norm{w}^2_P\Big) dt,$$
associated to the following two-dimensional nonlinear equation
\begin{align}\label{eqx2.1}
\frac{dx}{dt}&=-x-y,\quad x(0)=x_0,\notag\\
\frac{dy}{dt}&=-(1-x^2)y+u+w, \quad y(0)=y_0,
\end{align}
where $R=0.01$, $P=1$, $\gamma=1$, $\lambda=0.005$ and $dt=0.001$.
Let us observe that the origin is a locally stable equilibrium for this system, but it is not globally stable.  The HJI equation was  solved for $V_{0.005}$ in $\Omega=(-2,2)^2$  with
$100\times 100$ equidistant mesh points. Observe that this choice of $\Omega$ is not contained in the neighborhood of asymptotic stability for the uncontrolled system \eqref{eqx2.1}.
Next the solution thus obtained was used as initialization to solve with zero discount factor. The associated value function is denoted by $V_{0}$. The $L^\infty$ and $L^2$ errors were found to be  $\norm{V_{0}-V_{0.005}}_{L^\infty}=6.35\times 10^{-5}$ and $\norm{V_{0}-V_{0.005}}_{L^2}=0.0017$.

\subsection{Test 3: Lorenz system}
Here we treat the controlled Lorenz system, with both, stable and unstable parameter settings. We consider
\begin{align}\label{3dcost}
\underset{u(\cdot)\in \cU}{\min}\;\underset{w(\cdot)\in \cW}{\max}\; \cJ(u(\cdot),w(\cdot);(x_0,y_0,z_0))=\int_{0}^{\infty}e^{-\lambda t}\Big(\norm{x}^2+\norm{y}^2+\norm{z}^2+\norm{u}^2_{R}-\gamma^2\norm{w}^2_P\Big) dt,
\end{align}
subject to the three-dimensional Lorenz system, with  control and disturbance appearing in  the second equation,
\begin{align}
\frac{dx}{dt}&=\sigma(y-x)\label{eq3.4}, \quad x(0)=x_0,\\
\frac{dy}{dt}&=x(\rho-z)-y+u+w\label{eq3.5},\quad -1\leq u\leq 1,\quad y(0)=y_0,\\
\frac{dz}{dt}&=xy-\beta z\label{eq3.6}, \quad z(0)=z_0,
\end{align}
where  $\sigma>1$, $\rho>0$ and $\beta>0$. The system \eqref {eq3.4}-\eqref{eq3.6} has 3 steady state solutions,  $C^0=(0,0,0)$, $C^+=\Big(\sqrt{\beta(\rho-1)}, \sqrt{\beta(\rho-1)},\rho-1\Big)$ and $C^-=\Big(-\sqrt{\beta(\rho-1)}, -\sqrt{\beta(\rho-1)},\rho-1\Big)$. If $\rho<1$, all the steady state solutions are stable.

We first consider such a stable unconstrained situation and choose
$\rho=0.5$, $\sigma=10$, $\beta=8/3$. The other parameter settings are chosen to be $R=0.01$, $P=1$, $\gamma=2$, $dt=0.01$, and  $40\times 40 \times 40$ mesh points in the domain $\Omega=(-2,2)^3$.
We calculate the value function for $\lambda=0.005$ and denote it by $V_{0.005}$. Moreover, using $V_{0.005}$ as initialization, we also obtain the solution for $\lambda=0$ denoted by $V_0$. The $L^\infty$ error between these two solutions is found to be $\norm{V_0-V_{0.005}}_{L^\infty}=0.0042$.

We carried out many more experiments for the case of an unstable system with parameter settings $\sigma=10$, $\beta=8/3$, and $\rho=2$.
Again the HJI equation was solved over  $\Omega=(-2,2)^3$ with $40\times 40 \times 40$ spatial discretized points,  with $R=0.1$, $P=1$,  $dt=2$. To avoid difficulties with initialization we took $\lambda=0.05$ and chose the Riccati solution as the initial guess for policy iteration. 
We find that  $\gamma^*=1.9196$ and $\gamma^*=7.1136$ for the unconstrained and constrained cases respectively, with associated costs  $\cJ=1.86$ and $\cJ=3.89$. When discussing the behavior of trajectories of system \eqref{eq3.4}-\eqref{eq3.6} below,  we do this for $[x_0,y_0,z_0]=[-1.5,-1.5,1.5]$ as initial conditions.
\begin{figure}[!h]
\centering
(i)	\includegraphics[width=0.45\textwidth]{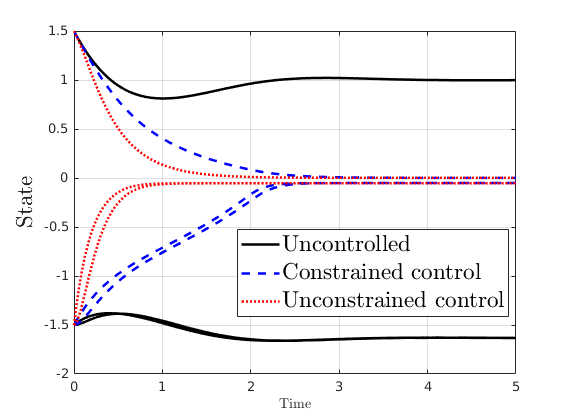}
(ii)\includegraphics[width=0.45\textwidth]{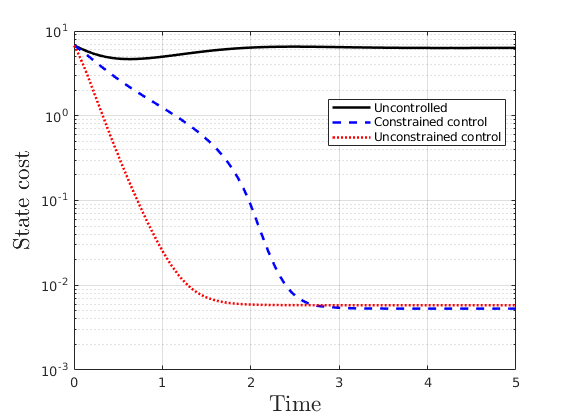}
(iii)	\includegraphics[width=0.45\textwidth]{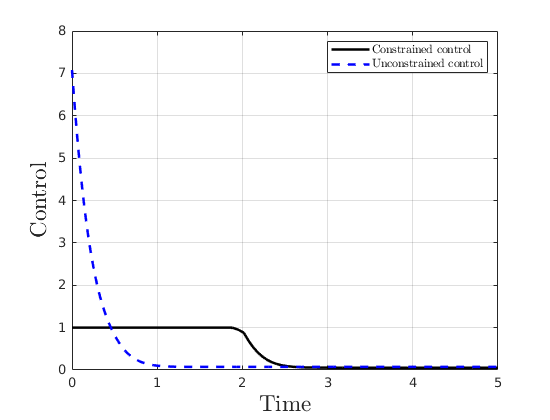}
(iv)	\includegraphics[width=0.45\textwidth]{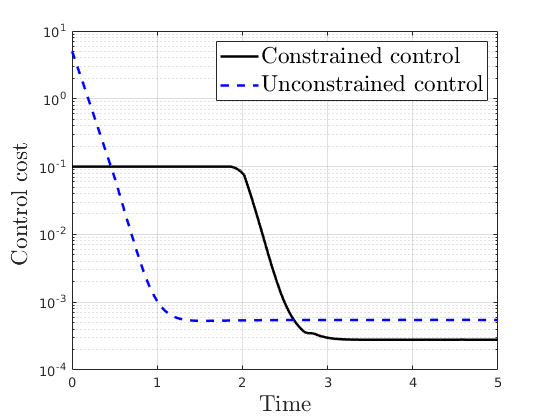}
(v)	\includegraphics[width=0.45\textwidth]{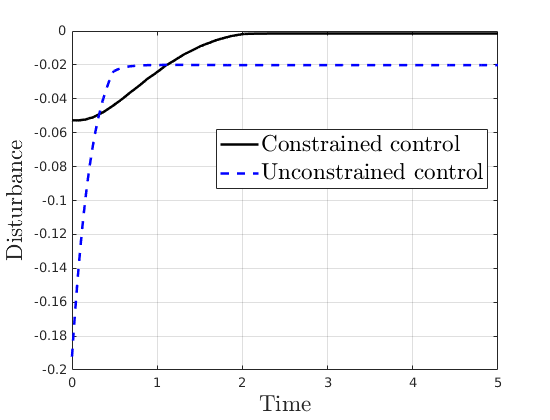}
(vi)\includegraphics[width=0.45\textwidth]{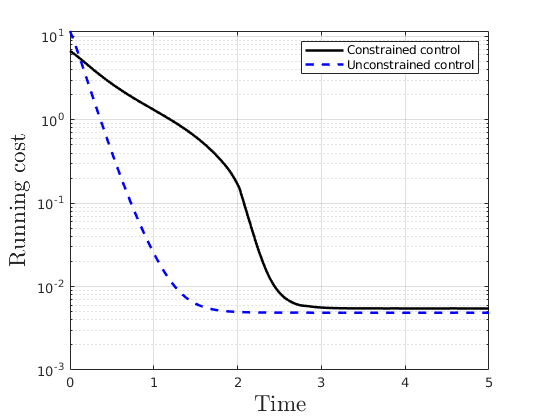}
\caption{Test 3: Deterministic case.  {\bf i)} State,  {\bf ii)} State cost $\norm{x(t)}^2+\norm{y(t)}^2+\norm{z(t)}^2$, {\bf iii)} Control, {\bf iv)} Control cost $\norm{u(t)}^2_R$, {\bf v)} Disturbance, {\bf vi)} Running cost $\norm{x(t)}^2+\norm{y(t)}^2+\norm{z(t)}^2+\norm{u(t)}^2_R-{\gamma^*}^2\norm{w(t)}^2_P$.}\label{fig:ex2.1}
\end{figure}

We  next depict results for the unstable parameter choice settings: From Figure \ref{fig:ex2.1}(i)-(ii) we observe  that the uncontrolled state  and the corresponding state cost  do not settle towards zero.
The states of the controlled system, both  for the unconstrained and the constrained HJI controls, documented in Figure  \ref{fig:ex2.1}(iii), approach zero. As expected this is faster for  unconstrained control case  than for the constrained case.  Similar behavior for the control cost  and running cost are shown in Figure \ref{fig:ex2.1}(iv) and Figure \ref{fig:ex2.1}(vi) respectively. In Figure \ref{fig:ex2.1}(v) the disturbance for the constrained control tends to zero faster compared to the disturbance for the unconstrained case since the constrained control cannot handle as much disturbance as the unconstrained one.

\begin{figure}[!h]
\centering
(i)\includegraphics[width=0.45\textwidth]{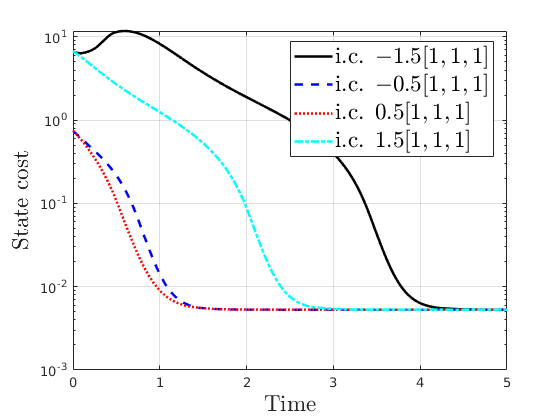}
(ii)	\includegraphics[width=0.45\textwidth]{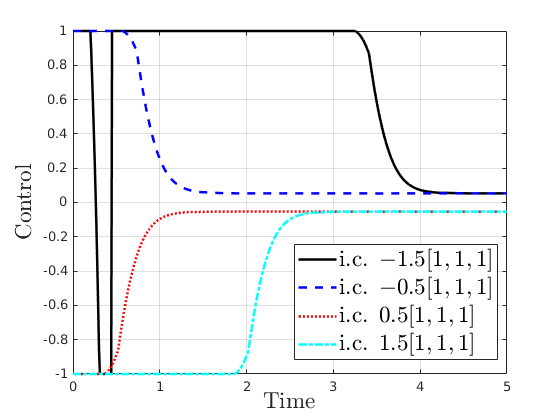}
\caption{Test 3: Deterministic case. State cost and Control trajectory with different initial conditions.  {\bf i)} State cost {\bf ii)} Control. }\label{fig:x}
\end{figure}
We show the behavior of the state cost $\norm{x(t)}^2+\norm{y(t)}^2+\norm{z(t)}^2$ and the control trajectory with respect to different initial conditions in Figure \ref{fig:x} (i)-(ii).	

%
\begin{figure}[!h]
\centering
(i)\includegraphics[width=0.45\textwidth]{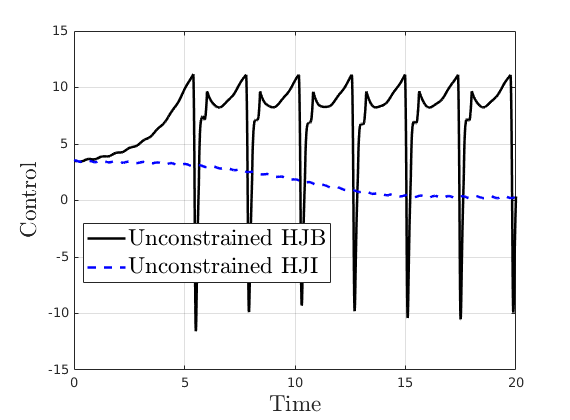}
(ii)	\includegraphics[width=0.45\textwidth]{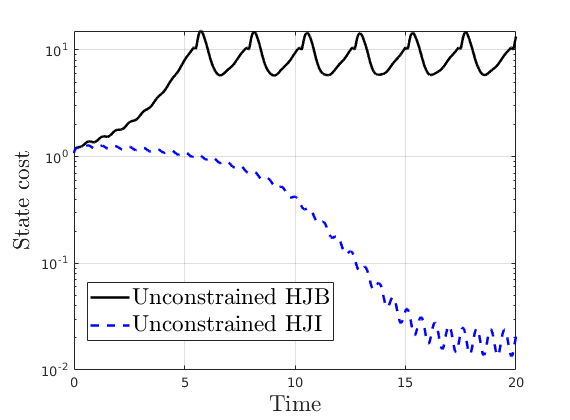}
\caption{Test 3: Unconstrained case, Comparison between the HJI and HJB synthesis with noise $w_1=-0.6u_{HJB}+0.1sin(10t)$, $[x_0,y_0,z_0]=[-0.6,-0.6,-0.6]$.   {\bf i)} Control, {\bf ii)}  State cost. }\label{fig:ex2.2}
\end{figure}
\begin{figure}[!h]
\centering
(i)\includegraphics[width=0.45\textwidth]{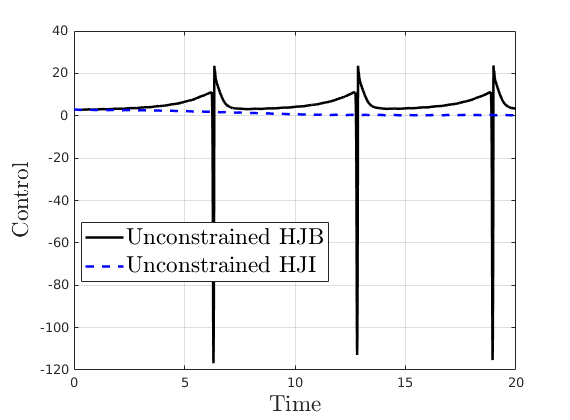}
(ii)	\includegraphics[width=0.45\textwidth]{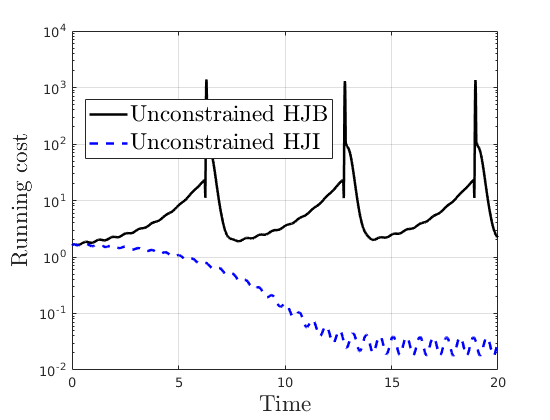}
\caption{Test 3: Unconstrained case, Comparison between the HJI and HJB synthesis with noise $w_2=-0.6u_{HJI}+0.1sin(10t)$, $[x_0,y_0,z_0]=[-0.5,-0.5,-0.5]$.   {\bf i)} Control, {\bf ii)}  Running cost. }\label{fig:ex2.3}
\end{figure}

We next show comparisons between the HJI and HJB synthesis in presence of more adverse noise and different initial conditions. If the HJB control is used within a disturbance of the form $w_1=-0.6u_{HJB}+0.1sin(10t)$ then the unconstrained trajectory for both  HJB control and state cost oscillate, whereas the unconstrained HJI control and state cost are successful in noise rejection, and settle at zero, see Figure \ref{fig:ex2.2}(i)-(ii). We mention that if we choose the disturbance of the form $w_1=-0.5u_{HJB}+0.1sin(10t)$, then there is no spike in the HJB case and the corresponding state and control trajectories converge towards zero.

Similarly, if the noise is of the form $w_2=-0.6u_{HJI}+0.1sin(10t)$, in Figure  \ref{fig:ex2.3}(i) we observe that the unconstrained HJI control is successful in  handling the noise, whereas the unconstrained HJB control fails and starts to oscillate. Similar behavior is shown for the unconstrained running cost in Figure \ref{fig:ex2.3}(ii).

\begin{figure}[!h]
\centering
(i)\includegraphics[width=0.45\textwidth]{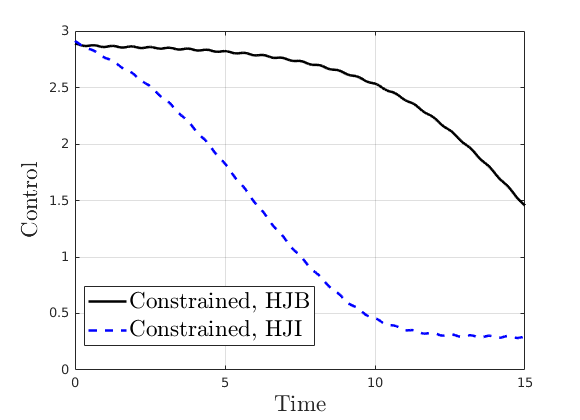}
(ii)	\includegraphics[width=0.45\textwidth]{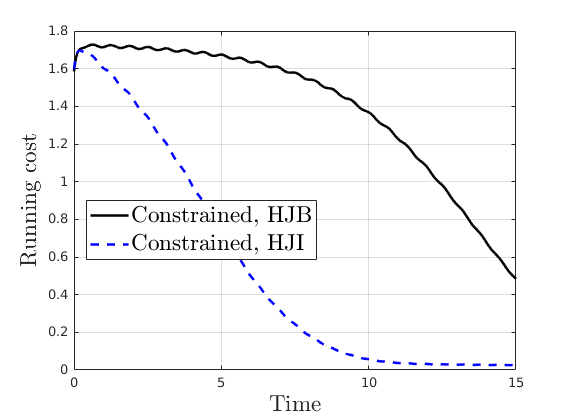}
\caption{Test 3: Constrained case, comparison between the HJI and HJB synthesis with noise $w_1=-0.6u_{HJB}+0.01sin(10t)$, $-4\leq u\leq 4$, $[x_0,y_0,z_0]=[-0.5,-0.5,-0.5]$, other parameters as before.   {\bf i)} Control, {\bf ii)}  Running cost. }\label{fig:ex2.4}
\end{figure}
\begin{figure}[!h]
\centering
(i)\includegraphics[width=0.45\textwidth]{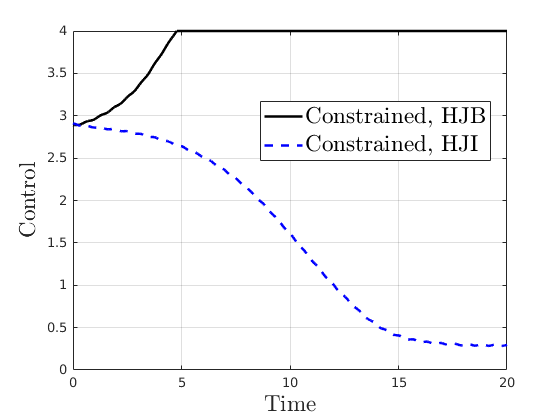}
(ii)	\includegraphics[width=0.45\textwidth]{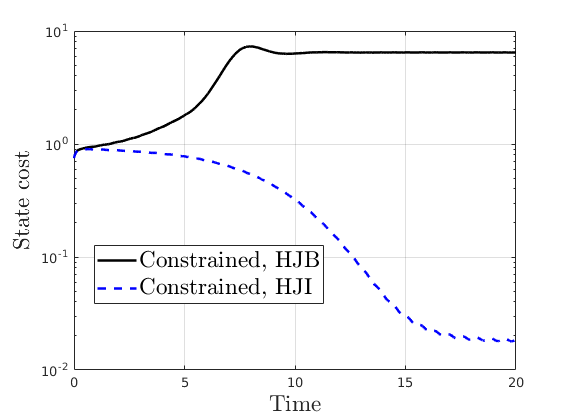}
\caption{Test 3: Constrained case, comparison between the HJI and HJB synthesis with noise $w_2=-0.6u_{HJI}+0.01sin(10t)$, $-4\leq u\leq 4$, $[x_0,y_0,z_0]=[-0.5,-0.5,-0.5]$,  other parameters as before.   {\bf i)} Control, {\bf ii)}  State cost. }\label{fig:ex2.5}
\end{figure}

To compare the constrained HJI and HJB synthesis in the presence of noise $w_1$ and $w_2$, we only modified the constraint in \eqref{eq3.5} to be $-4\leq u\leq 4$. With disturbances of the form $w_1=-0.6u_{HJB}+0.01sin(10t)$,  the HJI running cost tends to zero faster compared to the HJB cost, as seen in Figure \ref{fig:ex2.4}.
Further, with noise of the form $w_2=-0.6u_{HJI}+0.01sin(10t)$, the HJI synthesis is successful in driving the control and state cost to their equilibrium position, whereas the HJB synthesis fails as observed from Figure \ref{fig:ex2.5}.

The study of stochastic zero-sum differential games is motivated by the need to design robust controls for chaotic systems such as the Lorenz system, where sensitivity to noise and modeling errors severely limits classical approaches. Reformulating the problem as a two-player stochastic game, with the controller minimizing and the disturbance maximizing, naturally leads to a second-order HJI equation.
For the stochastic case, we put the additive noise $g_1=0.5$ in the third equation \eqref{eq3.6}. The other parameters and the initial condition are the same as in the deterministic Lorenz system. We obtain $\gamma^*=1.9135$ and $\gamma^*=7.1136$ for the stochastic unconstrained, respectively constrained HJI cases.
\begin{figure}[!h]
\centering
(i)	\includegraphics[width=0.45\textwidth]{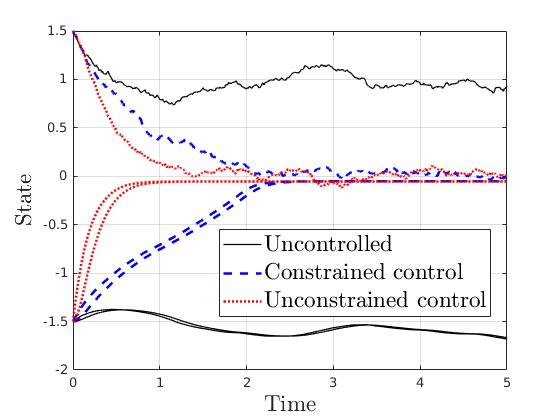}
(ii)\includegraphics[width=0.45\textwidth]{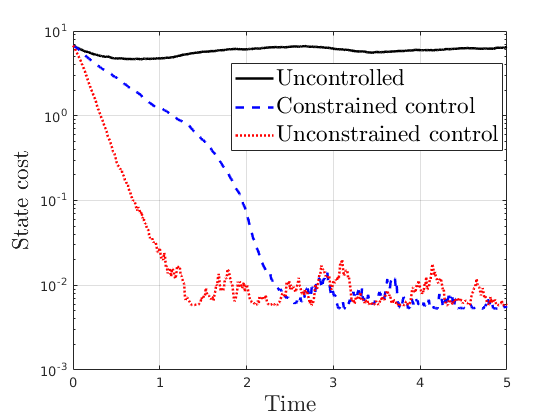}
(iii)	\includegraphics[width=0.45\textwidth]{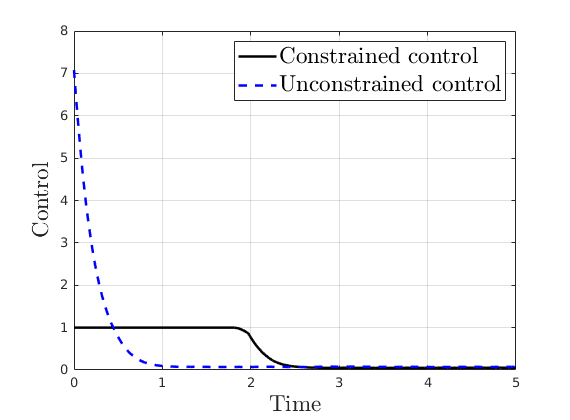}
(iv)	\includegraphics[width=0.45\textwidth]{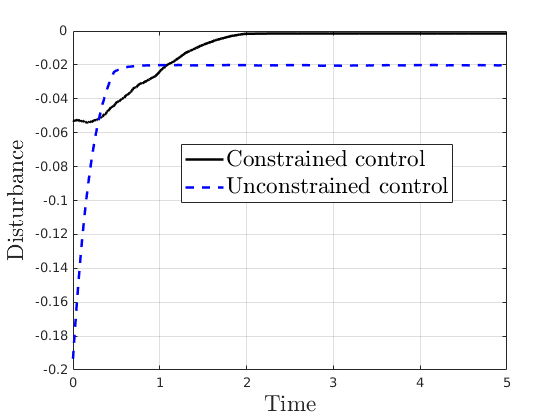}
\caption{Test 3: Stochastic case, $[x_0,y_0,z_0]=[-1.5,-1.5, 1.5]$.  {\bf i)} State,  {\bf ii)} State cost $\norm{x(t)}^2+\norm{y(t)}^2+\norm{z(t)}^2$, {\bf iii)} Control,  {\bf iv)} Disturbance.}\label{fig:ex2.6}
\end{figure}
From Figure \ref{fig:ex2.6} we observe that the behavior of the stochastic Lorenz  system is similar to the deterministic case except for the appearance of the randomness of the optimal state.

\begin{figure}[!h]
\centering
(i)\includegraphics[width=0.45\textwidth]{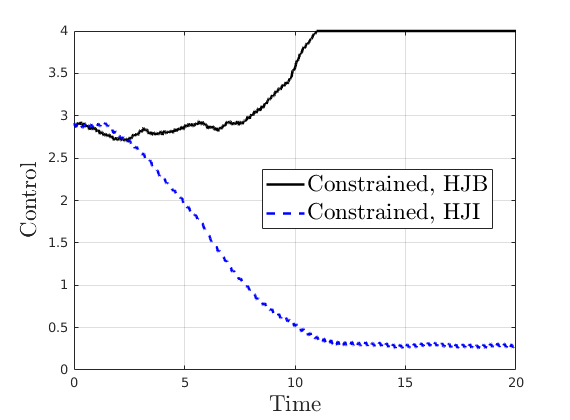}
(ii)	\includegraphics[width=0.45\textwidth]{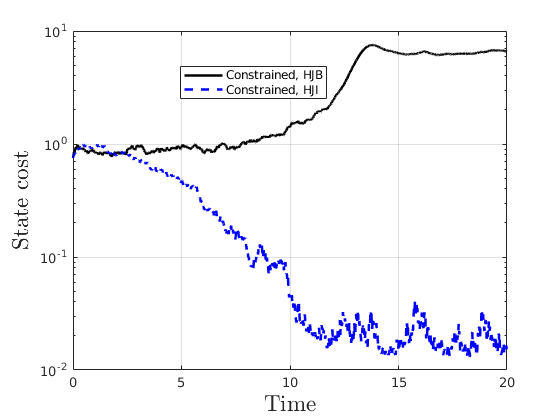}
\caption{Test 3: Constrained case, comparison between the HJI and HJB synthesis with noise $w_1=-0.6u_{HJB}+0.01sin(10t)$, $-4\leq u\leq 4$, $[x_0,y_0,z_0]=[-0.5,-0.5,-0.5]$, other parameters as before.   {\bf i)} Control, {\bf ii)}  State cost. }\label{fig:ex2.8}
\end{figure}

With the disturbance of the form $w_1=-0.6u_{HJB}+0.01sin(10t)$, comparisons between the   HJB and HJI synthesis under stochastic dynamics are shown in Figure \ref{fig:ex2.8}(i)-(ii) for the constrained case. The reader will notice that the HJI control and state cost both tend to settle to zero, while this does not happen in the HJB case. This difference occurs if the constraints are lifted.
%
%
%


We also carried out tests as $\lambda \to 0$. In the unconstrained case, with the solution to $V_{0.05}$ as an initialization,  $dt =0.01$ and keeping the  other parameters as before, we obtain for the  $L^\infty$ error  $\norm{V_0-V_{0.05}}_{L^\infty(\Omega)}=0.08$. If $dt$ is reduced to $dt=0.001$, then  $\norm{V_0-V_{0.05}}_{L^\infty(\Omega)}=0.03$. Further, for $\lambda=0.005$, we obtained $\norm{V_0-V_{0.005}}_{L^\infty(\Omega)}=3.1\times 10^{-5}$. In the constrained case, with $-4\leq u\leq 4$, then keeping $dt=0.01$ and $\gamma=10$, we obtain $\norm{V_0-V_{0.005}}_{L^\infty(\Omega)}=0.02$ and $\norm{V_0-V_{0.05}}_{L^\infty(\Omega)}=0.12$.

\subsection{Test 4: Van der Pol equation}
Here we consider the controlled Van der Pol equation. It is well-known that the unforced Van der Pol equation has a stable limit cycle, the origin is unstable. While admittedly  we cannot find a stabilizing initial condition, it appears to be of interest to test the proposed methodology for this challenging example. We consider
$$\underset{u(\cdot)\in \cU}{\min} \underset{w(\cdot)\in \cW}{\max}\; \cJ(u(\cdot),w(\cdot);(x_0,y_0))=\int_{0}^{\infty}e^{-\lambda t}\Big(\norm{x}^2+\norm{y}^2+\norm{u}^2_{R}-\gamma^2\norm{w}^2_P\Big) dt,$$
subject to the following two-dimensional Van der Pol equation
\begin{align}\label{eq4.1}
	\frac{dx}{dt}&=y,\quad x(0)=x_0,\notag\\
	\frac{dy}{dt}&=(1-x^2)y-x+u+w, \quad -1\leq u\leq 1,\quad y(0)=y_0,
\end{align}
where $R=0.01$, $P=1$ and $[x_0,y_0]=[1, -1]$.
We solve the HJI and HJB equations over $\Omega=(-2,2)^2$  choosing the time step $dt=2$ for the artificial term in the upwind scheme along with  $60\times 60$ spatial discretization points. Solving the HJI equation  with $\lambda=0.05$ and the Riccati solution as an initialization, we obtain the lowest value of $\gamma$ as $\gamma^*=7.9$, respectively $\gamma^*=0.65$, for the constrained  and unconstrained control cases. The total cost becomes $\cJ=4.69$ for the constrained control case, and $\cJ=1.04$ for the unconstrained case.

\begin{figure}[!h]
	\centering
	(i)	\includegraphics[width=0.45\textwidth]{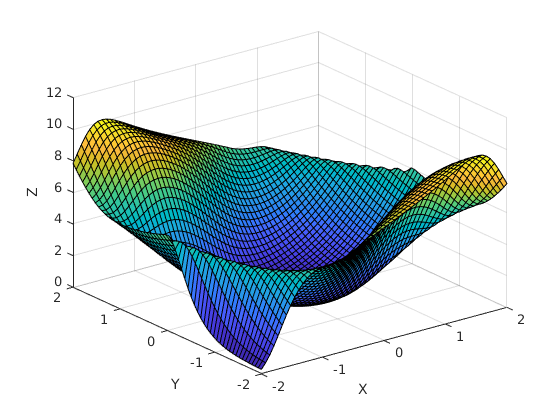}
	\caption{Test 4: Deterministic case.  {\bf i)} Value function, constrained case. }\label{fig:x2}
\end{figure}
The plot of the value function for the constrained case is documented in Figure \ref{fig:x2}.  It suggests that  the value function is not differentiable near boundary values of the domain, but that  it is smooth in a neighborhood of the origin.

\begin{figure}[!h]
	\centering
	(i)	\includegraphics[width=0.45\textwidth]{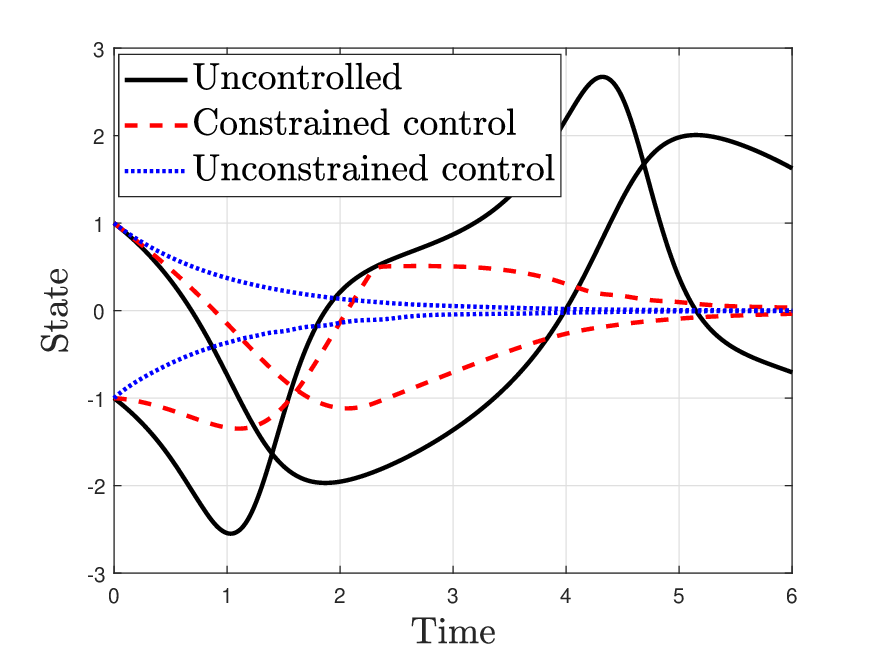}
	(ii)\includegraphics[width=0.45\textwidth]{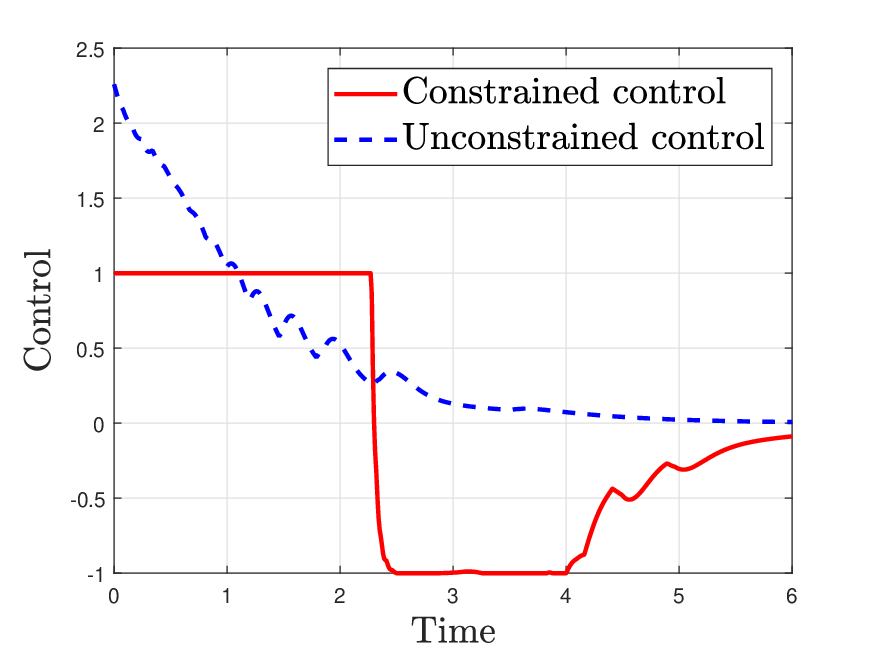}
	(iii)	\includegraphics[width=0.45\textwidth]{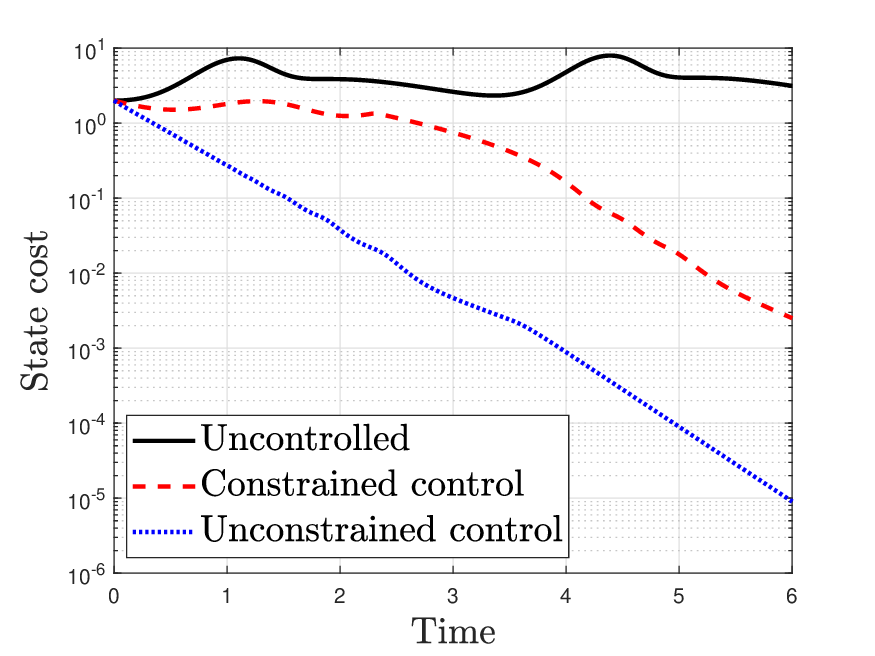}
	(iv)	\includegraphics[width=0.45\textwidth]{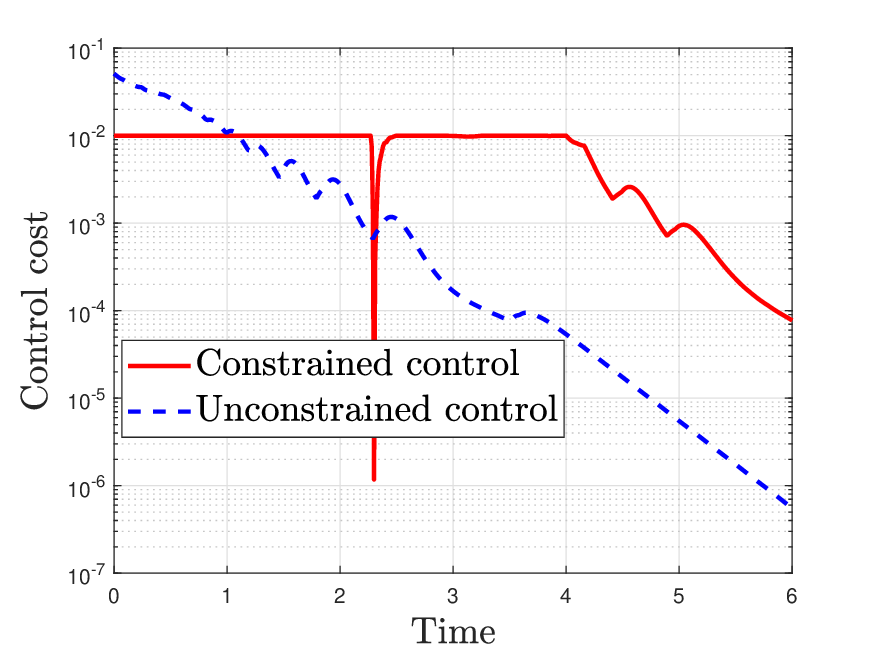}
	(v)	\includegraphics[width=0.45\textwidth]{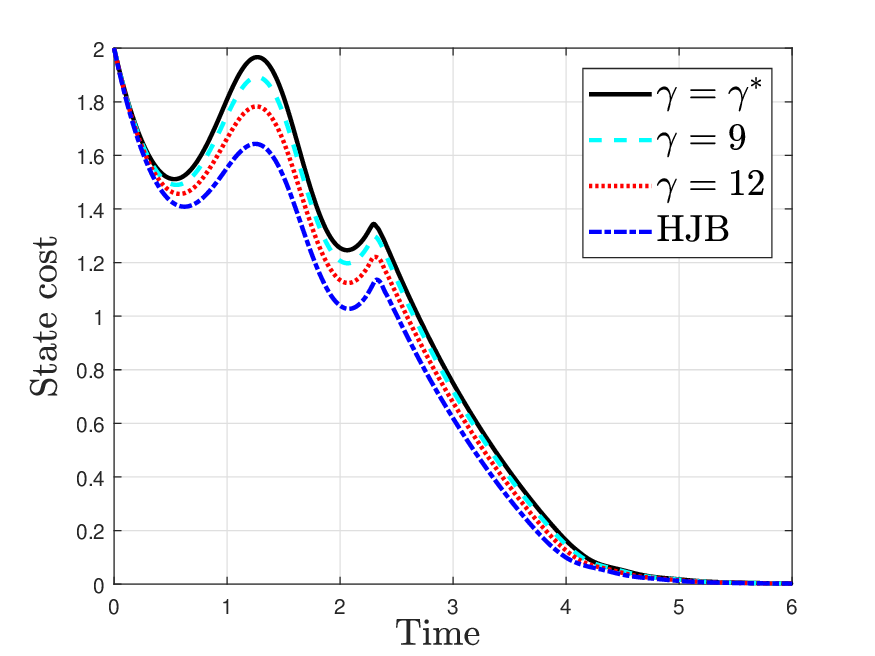}
	(vi)\includegraphics[width=0.45\textwidth]{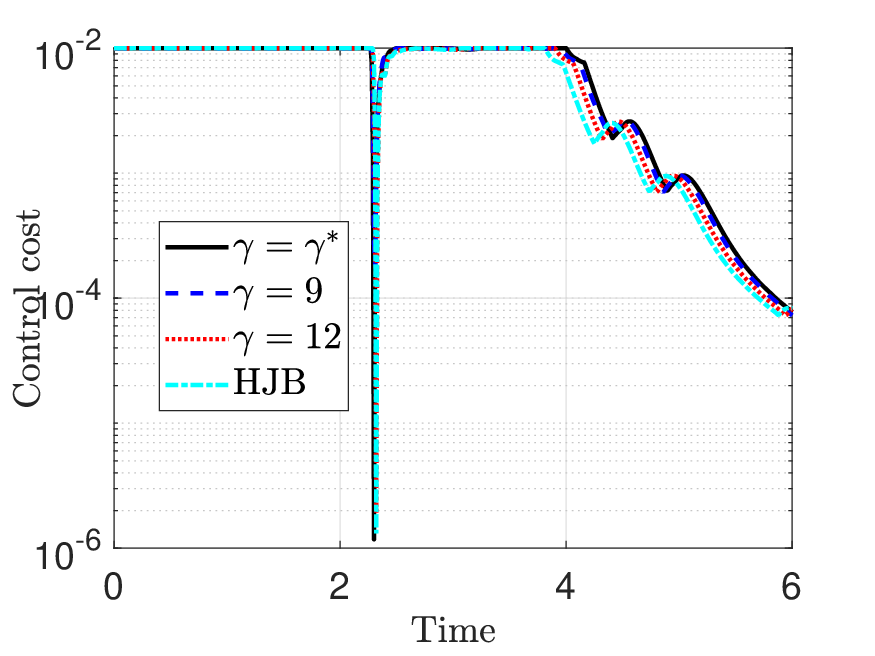}
	\caption{Test 4: Deterministic case.  {\bf i)} State,  {\bf ii)} Control, {\bf iii)} State cost, {\bf iv)} Control cost, {\bf v)} State cost convergence of the HJI to HJB synthesis in the constrained case, {\bf vi)} Control cost convergence of the HJI to HJB synthesis in the constrained case. }\label{fig:ex1.1}
\end{figure}

From Figure \ref{fig:ex1.1}(i) we  observe that the uncontrolled solution $(u=0=w)$ of  \eqref{eq4.1} does not settle at zero, but applying either constrained or unconstrained control, the controlled solutions converge to zero. The decay rate is faster for the unconstrained control case than for the constrained one. Constrained control is active up to time $t=4$ and after that it steadily tends to zero, see Figure \ref{fig:ex1.1}(ii). The  state cost $\norm{x(t)}^2+\norm{y(t)}^2$, documented in Figure \ref{fig:ex1.1}(iii), does not settle to zero in the uncontrolled case, whereas both state and control cost change their transient behavior to settle at zero both for the constrained and unconstrained HJI synthesis given in Figure \ref{fig:ex1.1}(iii)-(iv). The uppermost solid curve in Figure \ref{fig:ex1.1}(v) is related to the convergence of the state cost for the constrained control case with $\gamma=\gamma^*=7.9$. Further, if we increase $\gamma$ i.e. for $\gamma>\gamma^*$, the state cost for the constrained HJI converges to the constrained HJB  state cost which is also observed in Figure \ref{fig:ex1.1}(v). Similar transient behavior for the convergence of the HJI constrained control cost  to the HJB constrained control cost is shown in Figure \ref{fig:ex1.1}(vi). We also observed similar behavior for the unconstrained control case.
\begin{figure}[!h]
	\centering
	(i)	\includegraphics[width=0.45\textwidth]{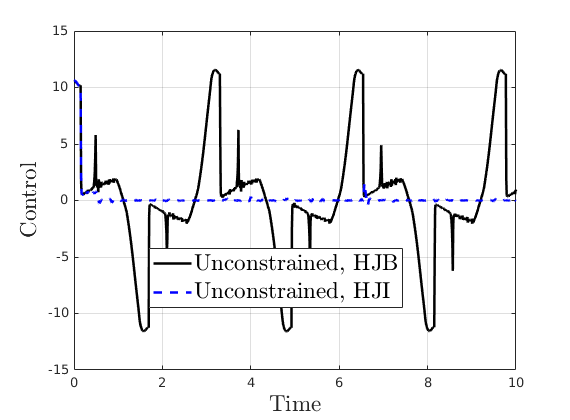}
	(ii)\includegraphics[width=0.45\textwidth]{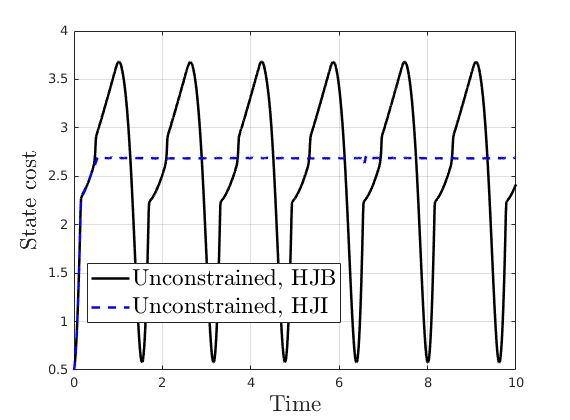}
	\caption{Test 4: Deterministic unconstrained case, comparison with the HJB case, $w_1=-0.6u_{HJB}+0.01sin(2t)$, $[x_0,y_0]=[-0.5,-0.5]$.  {\bf i)} Control,  {\bf ii)}  State cost.}\label{fig:ex1.2}
\end{figure}
\begin{figure}[!h]
	\centering
	(i)	\includegraphics[width=0.45\textwidth]{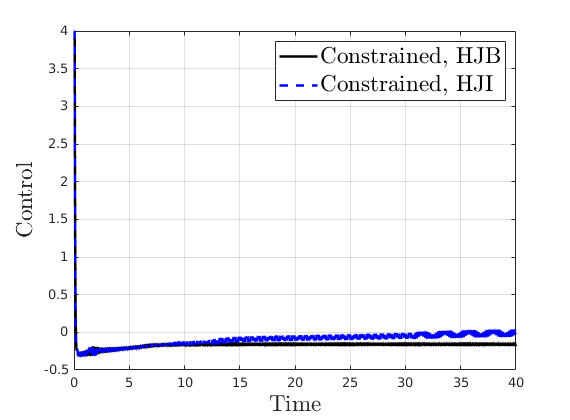}
	(ii)\includegraphics[width=0.45\textwidth]{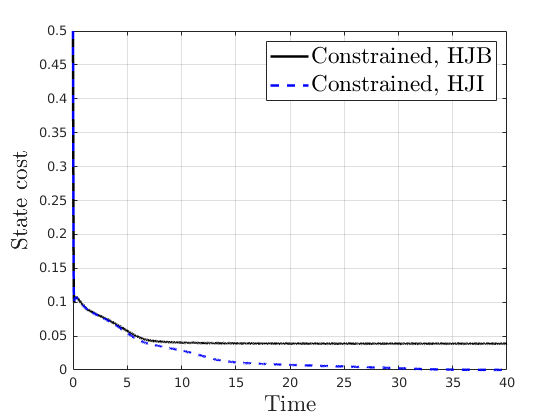}
	\caption{Test 4: Deterministic constrained case, comparison with the HJB case, $w_2=0.6u_{HJI}+0.1sin(40t)$, $[x_0,y_0]=[-0.5,-0.5]$,  $-4\leq u\leq 4$.  {\bf i)} Control,  {\bf ii)}  State cost.}\label{fig:exx1.2}
\end{figure}

Figures \ref{fig:ex1.2}-\ref{fig:exx1.2} show the temporal behavior of the  HJI and HJB synthesis against the worst-case scenario in presence of different initial conditions. With different types of additional noise, comparisons of the HJI-HJB synthesis are discussed. Figure \ref{fig:ex1.2}(i)-(ii) show that if the disturbance is of the form $w_1=-0.6u_{HJB}+0.01sin(2t)$, the control $u$ and the running state  cost $\norm{x}^2+\norm{y}^2$ oscillate for the unconstrained HJB case;  for the unconstrained HJI case they both are non-oscillatory. Concerning Figure \ref{fig:exx1.2}, where the disturbance is of the form $w_2=0.6u_{HJI}+0.1sin(40t)$ we set the control constraint as $-4\leq u\leq 4$, since $-1\leq u\leq 1$, for example, is too restrictive to observe the difference between the HJI and HJB case. We find that the constrained  HJI synthesis tends to zero whereas the constrained HJB fails, which is documented in Figures \ref{fig:exx1.2}(i)-(ii).

Concerning the  second-order HJI equation, we consider
$$\underset{u(\cdot)\in \cU}{\min}\; \underset{w(\cdot)\in \cW}{\max}\; \cJ(u(\cdot),w(\cdot);(x_0,y_0))=\E \int_{0}^{\infty}e^{-\lambda t}\Big(\norm{x}^2+\norm{y}^2+\norm{u}^2_{R}-\gamma^2\norm{w}^2_P\Big) dt,$$
subject to the following stochastic Van der Pol system
\begin{align}\label{eqx4.1}
	dx&=y \,dt,\quad x(0)=x_0,\notag\\
	dy&=\Big((1-x^2)y-x+u+w\Big)\,dt+0.1 \,dW, \quad -1\leq u\leq 1,\quad y(0)=y_0.
\end{align}
Here $R$, $P$, and $dt$ are the same as in the deterministic  case.\\
The dynamic behavior of engineering structures such as suspension bridges and offshore platforms, when subjected to random oscillations, is naturally described by second-order stochastic differential equations. Among these, variants of the Van der Pol oscillator play a crucial role, as their analysis is essential for developing robust control strategies in engineering and physical systems. These oscillators are central in the study of stability and bifurcations of stochastic dynamical systems. Within a game-theoretic framework, such systems can be interpreted as zero-sum stochastic differential games, where the controller seeks to stabilize the structure while nature or external disturbances act adversarially. 

\begin{figure}[!h]
	\centering
	(i)\includegraphics[width=0.45\textwidth]{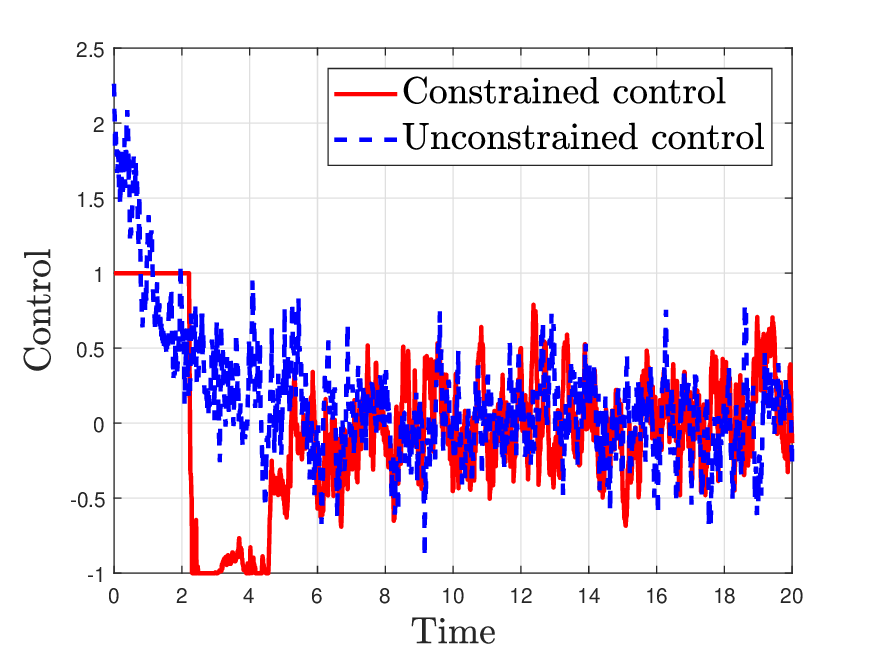}
	(ii)\includegraphics[width=0.45\textwidth]{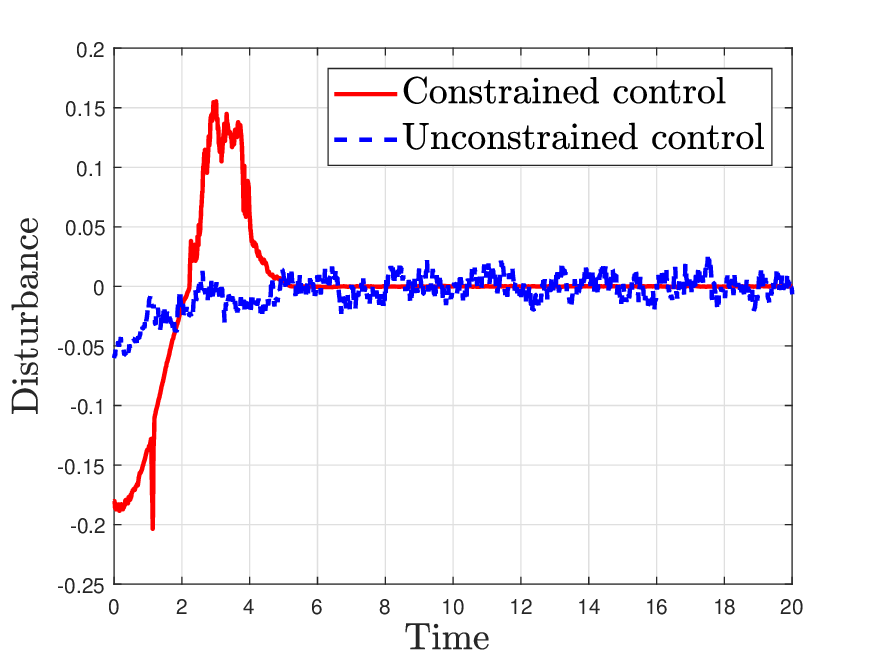}
	(iii)\includegraphics[width=0.45\textwidth]{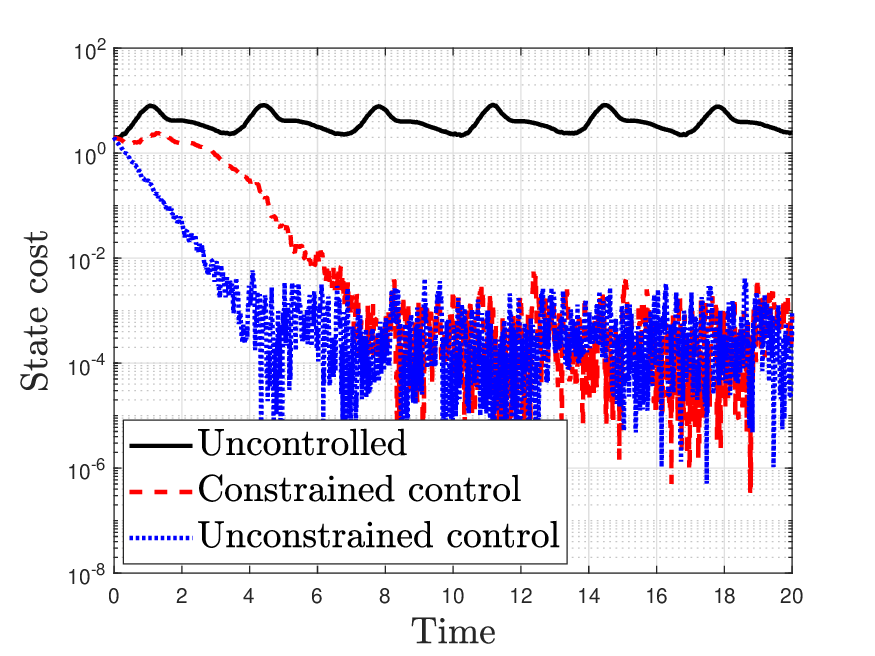}
	\caption{Test 4: Stochastic case, 2D Van der Pol equation.  {\bf i)} Control, {\bf ii)} Disturbance,  {\bf iii)} State cost. }\label{fig:ex1.3}
\end{figure}

Solving the second-order HJI equation, we get $\gamma^*=0.61$ for the unconstrained control case and $\gamma^*=4.89$ for the constrained case. The constrained and unconstrained  HJI controls under stochastic dynamics   can be   compared in Figure \ref{fig:ex1.3} (i). The corresponding state running cost
converges to zero in both cases, with a faster decay rate for the unconstrained HJI case.
The state cost for the uncontrolled and controlled cases can be compared  in Figure
\ref{fig:ex1.3} (iii). Figure \ref {fig:ex1.3} (ii) shows that the disturbance for both, the constrained and unconstrained HJI synthesis,  settle to zero.

	So far the discount factor was fixed at $\lambda = 0.005$. We next represent some results for the case when the solution for $\lambda = 0.005$ is used as initialization with $\lambda=0$. For these calculations  we choose  $dt=0.01$ . In the unconstrained case, with $\gamma=1$ we find  $\norm{V_0-V_{0.005}}_{L^\infty(\Omega)}=0.009$. Increasing $\lambda$ to  $\lambda=0.05$, we found that the error increases to  $\norm{V_0-V_{0.05}}_{L^\infty(\Omega)}=0.08$. We also computed the optimal $\gamma$ values,  and obtained  $\gamma^*=0.59$ and $\gamma^*=0.37$ for  $\lambda=0.005$ respectively $\lambda=0$, along  with the $L^\infty$ error $\norm{V_0-V_{0.005}}_{L^\infty(\Omega)}=0.07$. Similarly, in the constrained case we calculated $V_{0}$ and $V_{0.005}$ with $\gamma=10$, keeping the other parameters the same as in the unconstrained case. We obtained  $\norm{V_0-V_{0.005}}_{L^\infty(\Omega)}=0.089$. Thus, at the expense of decreasing $dt$, successful  results for $\lambda =0$ were obtained when initializing the algorithm with solutions from $\lambda>0$.


\section*{Concluding remarks}
Policy iteration for two-player zero-sum differential games  related to $\cH_{\infty}$ control problem isdiscussed for both deterministic and stochastic systems. Using an implicit upwind scheme for backward PDEs, numerical results for the first and second-order HJI cases are presented with a detailed comparison for the constrained and unconstrained  HJB synthesis. It would be of interest to extend the approach in  the case of two-player nonzero-sum games, where instead of the HJI equation, we have coupled HJB equations corresponding to the so-called Nash equilibrium. Our convergence analysis relies on regularity assumptions (Assumptions 1 and 2), consistent with standard practice in the literature, though not generally verifiable from simple problem data. The example in Section 4 shows that the value function may fail to be differentiable everywhere, especially near domain boundaries. Establishing rigorous conditions that guarantee $C^1$ or $C^2$ differentiability of the value function is a promising research direction. Since only viscosity solutions can be expected in full generality, relaxing the $C^1$/$C^2$ requirements and establishing convergence of policy iteration in that framework remains a challenging open problem, which we identify as another interesting direction for future work.
\section*{Acknowledgments}
The authors gratefully acknowledge support by the ERC advanced grant 668998
(OCLOC) under the EU's H2020 research program. Sudeep Kundu also gratefully acknowledges the support of the Science \& Engineering Research Board (SERB), Government of India, under the Start-up Research Grant, Project No. SRG/2022/000360.

%
\end{document}